\theoremstyle{plain}
\newtheorem{thm}{Theorem}[section]
\newtheorem{lem}[thm]{Lemma}
\theoremstyle{definition}
\theoremstyle{remark}
\newtheorem{remark}{Remark}
\newcommand{\eremk}{\hbox{}\hfill\rule{0.8ex}{0.8ex}}
\newcommand{\p}{p}
\newcommand{\h}{h}
\newcommand{\hE}{\h_\E}
\newcommand{\hF}{\h_\F}
\newcommand{\he}{\h_\e}
\newcommand{\E}{K}
\newcommand{\F}{F}
\newcommand{\e}{e}
\newcommand{\taun}{\mathcal T_n}
\newcommand{\tautilden}{\widetilde{\mathcal T}_n}
\newcommand{\tautildenE}{\tautilden^\E}
\newcommand{\EE}{\mathcal E^\E}
\newcommand{\Vbf}{\mathbf V}
\newcommand{\Vbfn}{\Vbf_n}
\newcommand{\VbfnE}{\Vbfn(\E)}
\renewcommand{\a}{a}
\newcommand{\aE}{\a^\E}
\renewcommand{\b}{b}
\newcommand{\bE}{\b^\E}
\newcommand{\ubf}{\mathbf u}
\newcommand{\ubfn}{\ubf_n}
\newcommand{\vbf}{\mathbf v}
\newcommand{\vbfn}{\vbf_n}
\newcommand{\Nbb}{\mathbb N}
\newcommand{\Rbb}{\mathbb R}
\newcommand{\Pbb}{\mathbb P}
\newcommand{\nablabold}{\boldsymbol \nabla}
\newcommand{\Deltabold}{\boldsymbol \Delta}
\newcommand{\Piboldnabla}{\boldsymbol\Pi^\nabla_\p}
\newcommand{\PiboldnablaF}{\boldsymbol\Pi^{\nabla,\F}_\p}
\newcommand{\Piboldzpmt}{\boldsymbol \Pi^0_{\p-2}}
\newcommand{\PiboldzpF}{\boldsymbol \Pi^{0,\F}_{\p}}
\newcommand{\Piboldzperp}{\boldsymbol \Pi^0_{\perp, \p-3}}
\newcommand{\Piboldzwedge}{\boldsymbol \Pi^0_{\wedge, \p-3}}
\newcommand{\q}{q}
\newcommand{\qbf}{\mathbf \q}
\newcommand{\qbfp}{\qbf_\p}
\newcommand{\qbfpF}{\qbf_\p^\F}
\newcommand{\qbfppmtF}{\qbf_{\p,\p-2}^\F}
\newcommand{\Ccal}{\mathcal C}
\newcommand{\boldalpha}{\boldsymbol \alpha}
\newcommand{\boldbeta}{\boldsymbol \beta}
\newcommand{\boldbetatilde}{\widetilde{\boldsymbol \beta}}
\newcommand{\boldgamma}{\boldsymbol \gamma}
\newcommand{\bolddelta}{\boldsymbol \delta}
\newcommand{\bolddeltatilde}{\widetilde{\boldsymbol \delta}}
\newcommand{\mboldalphabold}{\mathbf m_{\boldalpha}}
\newcommand{\nbf}{\mathbf n}
\newcommand{\nbfE}{\nbf_\E}
\newcommand{\ubfpi}{\ubf_\pi}
\newcommand{\ubfI}{\ubf_I}
\newcommand{\dof}{\text{dof}}
\newcommand{\dofB}{\text{dof}^{\text{B}}}
\newcommand{\dofperp}{\text{dof}^{\perp}}
\newcommand{\dofdiv}{\text{dof}^{\text{div}}}
\newcommand{\fbf}{\mathbf f}
\let\div\relax
\DeclareMathOperator{\div}{div}
\DeclareMathOperator{\rot}{rot}
\DeclareMathOperator{\curlbold}{\textbf{curl}}
\newcommand{\malphabold}{m_{\boldalpha}}
\newcommand{\mbetabold}{m_{\boldbeta}}
\newcommand{\mbetaboldtilde}{m_{\boldbetatilde}}
\newcommand{\mgammabold}{m_{\boldgamma}}
\newcommand{\mdeltabold}{m_{\bolddelta}}
\newcommand{\mdeltaboldtilde}{m_{\bolddeltatilde}}
\newcommand{\SE}{S^\E}
\newcommand{\SED}{S^\E_D}
\newcommand{\qpmo}{q_{\p-1}}
\newcommand{\Ndof}{N_{dof}}
\newcommand{\xbf}{\mathbf x}
\newcommand{\xperp}{\mathbf x^{\!\perp}}
\newcommand{\qpmth}{q_{\p-3}}
\newcommand{\qbfpmtw}{\mathbf q_{\p-2}}
\newcommand{\qbfpmth}{\mathbf q_{\p-3}}
\newcommand{\qbfboldalpha}{\mathbf q_{\boldalpha}}
\newcommand{\qbfboldalphaF}{\mathbf q_{\boldalpha}^\F}
\newcommand{\Norm}[2]{\Vert #1 \Vert_{#2}}
\newcommand{\SemiNorm}[2]{\vert #1 \vert_{#2}}
\newcommand{\Phibold}{\boldsymbol \Phi}
\newcommand{\phibold}{\boldsymbol \varphi}
\newcommand{\phiboldB}{\phibold^B}
\newcommand{\phiboldperp}{\phibold^\perp}
\newcommand{\phibolddiv}{\phibold^{\text{div}}}
\newcommand{\psibold}{\boldsymbol \psi}
\newcommand{\psiboldB}{\psibold^B}
\newcommand{\psiboldperp}{\psibold^\perp}
\newcommand{\psibolddiv}{\psibold^{\text{div}}}
\newcommand{\Ttilde}{\widetilde T}
\newcommand{\Ibf}{\mathbf I}
\newcommand{\sigmabold}{\boldsymbol \sigma}
\newcommand{\Pizpmo}{\Pi^0_{\p-1}}
\newcommand{\VbfnF}{\Vbf_n(\F)}
\newcommand{\DeltaF}{\Delta_\F}
\newcommand{\nablaF}{\nabla_\F}
\newcommand{\EcalF}{\mathcal E^\F}
\newcommand{\gbf}{\mathbf g}
\newcommand{\Abf}{\mathbf A}
\newcommand{\Bbf}{\mathbf B}
\newcommand{\Ai}{A^{(i)}}
\newcommand{\Bi}{B^{(i)}}
\newcommand{\Ci}{C^{(i)}}
\newcommand{\Aalpha}{A^{(\boldalpha)}}
\newcommand{\Balpha}{B^{(\boldalpha)}}
\newcommand{\Calpha}{C^{(\boldalpha)}}
\newcommand{\Agamma}{A^{(\boldgamma)}}
\newcommand{\Bgamma}{B^{(\boldgamma)}}
\newcommand{\Cgamma}{C^{(\boldgamma)}}
\newcommand{\deltacal}{\mathcal \delta}
\newcommand{\ah}{a_\h}
\newcommand{\ahE}{\ah^\E}
\newcommand{\ebf}{\mathbf e}
\newcommand{\BE}{B_\E}
\title{Stability and interpolation properties for Stokes-like virtual element spaces}
\author{J. Meng\thanks{School of Mathematics and Statistics, Xi'an Jiaotong University,
710049, Shaanxi, P.R. China, {\tt mengjian0710@163.com}} ,
L. Beir\~ao da Veiga\thanks{Dipartimento di Matematica e Applicazioni, Universit\`a di Milano Bicocca, 20125 Milan, Italy, {\tt lourenco.beirao@unimib.it, lorenzo.mascotto@unimib.it}}
\thanks{IMATI-CNR, 27100, Pavia, Italy}\ ,
L. Mascotto\footnotemark[2]
\thanks{Fakult\"at f\"ur Mathematik, Universit\"at Wien, 1090 Vienna, Austria, {\tt lorenzo.mascotto@univie.ac.at}}
\footnotemark[3]}
\date{}
\begin{document}

\maketitle

\begin{abstract}
\noindent We prove stability bounds for Stokes-like virtual element spaces in two and three dimensions.
Such bounds are also instrumental in deriving optimal interpolation estimates.
Furthermore, we develop some numerical tests in order to investigate the behaviour
of the stability constants also from the practical side.

\medskip\noindent
\textbf{AMS subject classification}:
65N12; 65N30; 65N50.

\medskip\noindent
\textbf{Keywords}: virtual element method;
optimal convergence; stability; Stokes problem.
\end{abstract}

\section{Introduction} \label{section:introduction}

In recent years, due to their flexibility in handling complex data features and adaptive mesh refinements, 
Galerkin methods based on polytopal meshes received an increasing attention.
The virtual element method (VEM)~\cite{BeiraoDaVeiga-Brezzi-Cangiani-Manzini-Marini-Russo:2013} is one amid the most successful of such polytopal methods.

Amongst the various problems that have been tackled with the VEM,
fluid static and dynamic problems have a prominent role.
The first paper coping with a lowest order VEM for the Stokes problem is~\cite{Antonietti-BeiraoDaVeiga-Mora-Verani:2014}.
Later, its general order conforming~\cite{BeiraoDaVeiga-Lovadina-Vacca:2017} and nonconforming versions~\cite{Cangiani-Gyrya-Manzini:2016, Liu-Li-Chen:2017}
have been discussed.
Based on that, conforming~\cite{BeiraoDaVeiga-Lovadina-Vacca:2018} and nonconforming VEMs for the Navier-Stokes problem~\cite{Liu-Chen:2019} were also introduced.
All these references are concerned with divergence free methods.

In addition to such works that represent the backbone of the VEM for fluid-type problems, other similar topics have been studied as well,
an incomplete and short list being:
mixed VEMs for the pseudo-stress-velocity formulation of the Stokes problem~\cite{Caceres-Gatica:2017};
mixed VEMs for quasi-Newtonian flows~\cite{Caceres-Gatica-Sequeira:2018};
mixed VEMs for the Navier-Stokes problem~\cite{Gatica-Munar-Sequeira:2018};
other variants of the VEM for the Darcy problem~\cite{Wang-Wang-Chen-He:2019, Vacca:2018, Caceres-Gatica-Sequeira:2017, Mora-Reales-Silgado:2021};
the analysis of the Stokes complex in the VEM framework~\cite{BeiraoDaVeiga-Mora-Vacca:2019, BeiraoDaVeiga-Dassi-Vacca:2020};
a stabilized VEM for the unsteady incompressible Navier-Stokes problem~\cite{Irisarri-Hauke:2019};
implementation details~\cite{Dassi-Vacca:2020};
a pressure robust variant of the VEM for the Stokes problem~\cite{Frerichs-Merdon:2022};
the magneto-hydrodynamic problem~\cite{BeiraoDaVeiga-Dassi-Manzini-Mascotto:2022};
the $\h\p$-version of the standard VEM for the Stokes problem~\cite{Chernov-Marcati-Mascotto:2021};
stationary quasi-geostrophic equations of the ocean~\cite{Mora-Silgado:2022};
the unsteady Navier-Stokes problem~\cite{Adak-Mora-Natarajan-Silgado:2021}.

Needless to write, other polytopal methods have been used to approximate the above problems.
For instance, we recall
the local discontinuous Galerkin method~\cite{Cockburn-Kanschat-Schoetzau-Schwab:2002};
hybrid discontinuous Galerkin schemes for the Stokes flow~\cite{Cockburn-Shi:2014,Cockburn-Sayas:2014};
hybrid discontinuous Galerkin schemes for the Navier-Stokes problem~\cite{Qiu-Shi:2016};
hybrid high-order methods~\cite{Aghili-Boyaval-DiPietro:2015,DiPietro-Krell:2018, Botti-DiPietro-Droniou:2019,Burman-Delay-Ern:2021}.

The analysis of the VEM is based on showing optimal a priori error estimates,
which are proved by means of certain
\emph{stability} and \emph{polynomial consistency} properties.
Optimal convergence is then derived based on using best polynomial and interpolation estimates.

All in all, the main difference with respect to the finite element setting resides in further employing
\begin{itemize}
    \item interpolation estimates in virtual element spaces;
    \item stability properties on a discrete bilinear form.
\end{itemize}
Several interpolation estimates for Stokes-type virtual element spaces are available in the literature;
see, e.g., \cite[Proposition~$4.2$]{BeiraoDaVeiga-Lovadina-Vacca:2017} and~\cite[Theorem~$4.1$]{BeiraoDaVeiga-Lovadina-Vacca:2018}.
Such interpolation estimates are rather technical to prove,
strongly hinge upon the definition of the local virtual element spaces,
and have been proved in two dimensions only.

On the other hand, to the best of our knowledge, stability properties for Stokes-like virtual element spaces have never been explicitly proved.
We point out that several works coping with explicit stability estimates for standard Poisson-like virtual element spaces are available;
see, e.g., \cite{Brenner-Sung:2018,BeiraoDaVeiga-Lovadina-Russo:2017, Chen-Huang:2018, Cao-Chen:2018, BeiraoDaVeiga-Chernov-Mascotto-Russo:2018}.

This paper aims at closing this theoretical gap.
Notably, we contribute to the current state of the art along the three following avenues:
\begin{enumerate}
    \item We prove explicit stability properties for Stokes-like virtual element spaces in two and three dimensions.
    To this aim, we employ two different stabilizations, namely
    one given by the inner product of the degrees of freedom
    and one in integral form,
    which is independent of the chosen degrees of freedom;
    \item Based on such stability estimates, we prove novel interpolation estimates, which deliver
    the same convergence as those already available in the literature but are much easier to prove.
    \item The above achievements are proven for regular polytopal meshes.
    Thus, we also exhibit numerical results investigating the stability constants for sequences of elements with degenerating geometry
    as well as with respect to the degree of accuracy of the method.
\end{enumerate}

\noindent In the remainder of the introduction,
we pinpoint some basic notation of the paper,
discuss the model problem we aim to approximate,
introduce sequences of regular polytopal meshes,
and detail the structure of the paper.

\paragraph*{Notation.}
Throughout, we employ standard notation for Lebesgue and Sobolev spaces.
Notably, given a domain~$D \subset \Rbb^d$, $d=1,2,3$,
$L^2(D)$ denotes the space of measurable and integrable squared functions
and~$L^2_0(D)$ its subspace consisting of functions with zero average over~$D$.
Given~$r\in\Nbb$, $H^r(D)$ denotes the Sobolev space of order~$r$,
i.e., the subspace of~$L^2(D)$ consisting of functions with integrable squared weak derivatives~$D^r\cdot$ up to order~$r$ (we conventionally set~$H^0(D) = L^2(D)$).
We endow the above spaces with the standard bilinear forms~$(\cdot,\cdot)_{r,D}:=(D^r\cdot,D^r\cdot)_{0,D}$ and (semi)norms
$\Norm{\cdot}{r,D}^2 := \sum_{\ell=0}^r(\cdot,\cdot)_{\ell,D}$
and $\SemiNorm{\cdot}{r,D}^2 := (\cdot,\cdot)_{r,D}$.
We also introduce~$H^1_0(D)$ as the space of $H^1$ functions with zero trace over the boundary~$\partial D$ of~$D$.
Noninteger order Sobolev spaces can be constructed by interpolation.

Given~$\ell \in \Nbb$, $\Pbb_\ell(D)$ denotes the space of polynomials of degree at most~$\ell$
over~$D$ and~$\Pbb_\ell(D) \setminus \Rbb := \Pbb_\ell(D) \cap L^2_0(D)$.
We use the convention~$\Pbb_{-1}(D)=\{ 0 \}$.

We recall the definition of standard differential operators in two dimensions.
For~$\E \subset \Rbb^2$, we introduce the $\rot$ and~$\curlbold$ operators as follows: given~$v:\E \to \Rbb$ and~$\vbf:\E \to \Rbb^2$,
\[
\rot \vbf := -\partial_{y} \vbf_1  + \partial_{x} \vbf_2 ,
\qquad
\curlbold v   := (\partial_{y} v, - \partial_{x} v )^T.
\]
We denote the vector product between two vectors~$\ubf$ and~$\vbf$ in three dimensions by~$\ubf \wedge \vbf$.
In other words, given~$\ebf_j \in \Rbb^3$
the vector satisfying~$\ebf_j{}_{|\ell} = \delta_{j,\ell}$,
and given the splittings
\[
\ubf = \sum_{j=1}^3 u_j \ebf_j,
\qquad
\vbf = \sum_{j=1}^3 v_j \ebf_j,
\]
the vector product is the ``determinant'' of the matrix
\[
\begin{bmatrix}
\ebf_1    &\ebf_2    &\ebf_3\\
u_1       &u_2       &u_3   \\
v_1       &v_2       &v_3
\end{bmatrix}.
\]

Finally, given two positive quantities~$a$ and~$b$, we use the short-hand-notation~$a \lesssim b$
instead of \emph{there exists a positive constant~$c$ independent of the mesh
such that} $a \le c \ b$.
We further write~$a\approx b$ if we have~$a\lesssim b$ and~$b \lesssim a$.

\paragraph*{The model problem.}
Let~$\Omega\subset\Rbb^d$, $d=2,3$, be an open domain
and~$\fbf \in [L^2(\Omega)]^d$.

As a model problem, we consider the Stokes problem
\[
\begin{cases}
\text{find } (\ubf,s) \text{ such that}\\
-\Deltabold \ubf - \nabla s = \fbf  & \text{in } \Omega\\
-\div \ubf = 0                      & \text{in } \Omega\\
\ubf = \mathbf 0                    & \text{on } \partial \Omega,
\end{cases}
\]
which in weak formulation reads as follows:
\[
\begin{cases}
\text{find } (\ubf,s) \in [H^1_0(\Omega)]^d \times L^2_0(\Omega):= \Vbf \times Q \text{ such that}\\
(\nablabold \ubf, \nablabold \vbf)_{0,\Omega}
+ (\div \vbf, s)_{0,\Omega} 
= (\fbf,\vbf)_{0,\Omega} 
& \forall \vbf \in \Vbf\\
(\div \ubf, t)_{0,\Omega} = 0 & \forall t \in Q. 
\end{cases}
\]
The well posedness of this problem is standard~\cite{Boffi-Brezzi-Fortin:2013}.

\paragraph*{Regular polytopal meshes}
Throughout, we are given sequences~$\{\taun\}$ of polytopal meshes over the domain~$\Omega$.
In~$d=2$, $\taun$ consists of conforming polygons;
in~$d=3$, $\taun$ consists of conforming polyhedra.
We denote a generic element of~$\taun$ by~$\E$;
$\partial \E$ denotes the boundary of~$\E$
with outward unit vector~$\nbfE$.
For any geometric object~$D\subset\Rbb^d$, $d=1,2,3$,
we denote its barycenter, measure, and diameter
by~$\xbf_D$, $\vert D \vert$, and~$\h_D$, respectively.

Given an element~$\E$ in three dimensions, $\partial \E$ is the union of its faces~$\F$.
Given an element in two dimensions or a face~$\F$ in three dimensions,
$\partial \F$ is the union of its edges~$\e$.

We demand standard regularity assumptions on~$\taun$:
there exists~$\rho>0$ such that
\begin{itemize}
    \item for~$d=2$,
    \begin{itemize}
        \item every polygon~$\F$ is star-shaped with respect to a disk of diameter greater than or equal to~$\rho \hF$;
        \item every edge~$\e$ satisfies $\he\ge \rho \hF$;
    \end{itemize}
    \item for~$d=3$, 
    \begin{itemize}
        \item every polyhedron~$\E$ is star-shaped with respect to a disk of diameter greater than or equal to~$\rho \hF$;
        \item every face~$\F$ of~$\E$ is star-shaped with respect to a disk of diameter greater than or equal to~$\rho \hF$;
        \item for every face~$\F$ of~$\E$ and edge~$\e$ of~$\F$,
        we have~$\he \ge \rho \hF \ge \rho^2 \he$.
    \end{itemize}
\end{itemize}
Given~$\xbf = (x_1,y_1)$, we define~$\xperp:=(x_2,-x_1)$.
We denote the set of edges of a polytope~$\E$ by~$\EE$
and the set of faces of a polyhedron~$\E$ by~$\EcalF$.

\paragraph*{Outline of the paper.}
In Sections~\ref{section:2D} and~\ref{section:3D},
after recalling the definition of Stokes-type virtual element spaces~\cite{BeiraoDaVeiga-Lovadina-Vacca:2017,BeiraoDaVeiga-Dassi-Vacca:2020},
we prove stability and interpolation properties in two and three dimensions,
respectively.
We perform the analysis for two explicit stabilizations.
In Section~\ref{section:numerical-stabilization},
we numerically check the stability properties (in 2D only) on sequences of elements with degenerating geometry and degree of accuracy.
We draw some conclusions in Section~\ref{section:conclusions}.

\section{The two dimensional case} \label{section:2D}

This section is devoted at proving stability and interpolation properties in two dimensions.
In Section~\ref{subsection:space-2D}, we recall the definition of the Stokes-like virtual element space~\cite{BeiraoDaVeiga-Lovadina-Vacca:2017}.
Stability properties are derived in Sections~\ref{subsection:stability-2D} and~\ref{subsection:stability-2D:dofi-dofi}
for a projection based and a degrees of freedom based stabilizations, respectively.
In Section~\ref{subsection:interpolation-2D},
we provide a novel and shorter proof of interpolation estimates 
based on the previously proven stability properties.

In what follows, we fix $\p \in \Nbb$, $\p \ge 2$,
which will denote the degree of accuracy of the space.
We do not consider the case~$\p=1$ as the corresponding method
is known to be unstable~\cite{Antonietti-BeiraoDaVeiga-Mora-Verani:2014}.

\subsection{Virtual element spaces in two dimensions} \label{subsection:space-2D}
Given a polygon~$\E\in\taun$,
we introduce the space
\[
\VbfnE := \{ \vbfn \in [H^1(\E)]^2 \mid \vbfn \text{ satisfies } \eqref{local-auxiliary-problem} \},
\]
where, for some~$s\in L^2_0(\E)$,
\begin{equation} \label{local-auxiliary-problem}
\begin{cases}
-\Deltabold \vbfn - \nabla s = \xperp \qpmth   & \qpmth \in \Pbb_{\p-3}(\E) \\[4pt]
\div \vbfn = \qpmo                             & \qpmo \in \Pbb_{\p-1}(\E)\\[4pt]
\vbfn{}_{|\partial \E} \in [\Ccal^0(\partial \E)]^2, \quad \vbfn{}_{|\e} \in [\Pbb_\p(\e)]^2     & \forall \e \in \EE,
\end{cases}
\end{equation}
with all equations to be intended in a weak sense.

We endow the space~$\VbfnE$ with the following set of unisolvent degrees of freedom (DoFs)~\cite{BeiraoDaVeiga-Lovadina-Vacca:2017}:
given~$\vbfn \in \VbfnE$
\begin{itemize}
\item the vector values \textbf{Dv}$_1$($\vbfn$) at the vertices of~$\E$;
\item the vector values \textbf{Dv}$_2$($\vbfn$) at the $\p-1$ internal Gau\ss-Lobatto nodes on each edge~$\e$ of~$\EE$;
\item for~$\p\ge3$, given $\{ \qbfboldalpha \}$ a basis of~$\xperp \Pbb_{\p-3}(\E)$, the ``orthogonal'' moments
\begin{equation} \label{complementary:moments}
\textbf{Dv}_3(\vbfn) := \frac{1}{\vert \E \vert} \int_\E \vbfn \cdot \qbfboldalpha;
\end{equation}
\item given $\{ \malphabold \}$ a basis of~$\Pbb_{\p-1}(\E) \setminus \Rbb$, the ``divergence'' moments
\begin{equation} \label{divergence:moments}
\textbf{Dv}_4(\vbfn) := \frac{\hE}{\vert \E \vert} \int_\E \div \vbfn \ \malphabold.
\end{equation}
\end{itemize}
As usual, we require that the bases~$\{ \qbfboldalpha \}$ and~$\{ \malphabold \}$ are invariant with respect to translations and dilations;
see~\cite{Ahmad-Alsaedi-Brezzi-Marini-Russo:2013, Dassi-Vacca:2020}.
More precisely, using the standard multi-index notation,
for given real coefficients~$\lambda_{\boldalpha}$,
such polynomials have the form
\[
\malphabold
:= \sum_{\boldalpha} \lambda_{\boldalpha} 
        \left( \frac{\xbf-\xbf_\E}{\hE}  \right)^{\boldalpha},
\qquad\qquad
\qbfboldalpha:= \xperp \malphabold.
\]
It is known~\cite{BeiraoDaVeiga-Lovadina-Vacca:2017} that, given~$\vbfn \in \VbfnE$ with known DoFs,
$\div \ \vbfn$ is explicitly computable.
Further, we can compute the two orthogonal projectors~$\Piboldzpmt: [L^2(\E)]^2 \to [\Pbb_{\p-2}(\E)]^2$
and~$\Piboldzperp: [L^2(\E)]^2 \to \xperp \Pbb_{\p-3}(\E)$ defined as follows:
for all~$\vbfn$ in~$\VbfnE$,
\begin{equation} \label{L2-projectors}
\begin{split}
& (\qbfpmtw, \vbfn - \Piboldzpmt \vbfn)_{0,\E}=0
\qquad\quad\;\;\; \forall \qbfpmtw \in [\Pbb_{\p-2}(\E)]^2, \\[4pt]
& (\xperp \qpmth, \vbfn - \Piboldzperp \vbfn)_{0,\E}=0
\qquad \forall \qpmth \in \Pbb_{\p-3}(\E).
\end{split}
\end{equation}
We can also compute the $H^1$ projector~$\Piboldnabla: [H^1(\E)]^2 \to [\Pbb_{\p}(\E)]^2$ defined as
\begin{equation} \label{H1-projectors-bulk}
(\nablabold \qbfp, \nablabold (\vbfn - \Piboldnabla \vbfn))_{0,\E}=0,
\qquad
\int_{\partial \E} (\vbfn - \Piboldnabla \vbfn) = \mathbf 0
\qquad \forall \vbfn \in \VbfnE,\;
\forall \qbfp \in [\Pbb_{\p}(\E)]^2.
\end{equation}
Functions in the virtual element space~$\VbfnE$, as well as their gradients,
are not available in closed form.
For this reason, following the virtual element gospel~\cite{BeiraoDaVeiga-Brezzi-Cangiani-Manzini-Marini-Russo:2013},
we discretize the bilinear form~$(\nabla \cdot, \nabla \cdot)_{0,\Omega}$ as follows:
for all~$\E \in \taun$,
\begin{equation} \label{local-dscrete-bf}
\begin{split}
\ahE(\ubfn,\vbfn)
& := \aE(\Piboldnabla \ubfn, \Piboldnabla \vbfn) 
  + \SE ( (\Ibf-\Piboldnabla)\ubfn, (\Ibf-\Piboldnabla)\vbfn)\\
& := (\nablabold \Piboldnabla \ubfn, \nablabold \Piboldnabla \vbfn)_{0,\E}
  + \SE ( (\Ibf-\Piboldnabla)\ubfn, (\Ibf-\Piboldnabla)\vbfn).
\end{split}
\end{equation}
The bilinear form~$\SE(\cdot,\cdot)$ is required to be coercive and continuous on~$\VbfnE\cap\ker(\Piboldnabla)$ uniformly in the mesh elements.
More precisely, on this space,
we require~$\SE(\cdot,\cdot) \approx \vert \cdot \vert^2_{1,\E}$.
Proving such an equivalence is our goal in the forthcoming sections.

The global counterpart of the space~$\VbfnE$ is constructed by a standard $H^1$-conforming DoFs-coupling.

\begin{remark} \label{remark:subtessellation}
Due to the mesh regularity assumptions,
each~$\E \in \taun$ can be split into the union of shape regular simplices
\[
\E = \cup_{T \in \tautildenE} T.
\]
This applies both in the two and the three dimensional cases.
\eremk
\end{remark}

\subsection{Stability estimates for a projection based stabilization} \label{subsection:stability-2D}
Given~$\E \in \taun$, we consider the local stabilization
\begin{equation} \label{explicit:stabilization-2D}
\SE(\ubfn,\vbfn)
:= \hE^{-2} (\Piboldzperp \ubfn, \Piboldzperp \vbfn)_{0,\E}
   + (\div \ubfn, \div \vbfn)_{0,\E}
   + \hE^{-1} (\ubfn,\vbfn)_{0,\partial \E}.
\end{equation}

We show stability estimates for the bilinear form~$\SE(\cdot,\cdot)$.
\begin{thm} \label{theorem:stability-2D}
The following stability bounds are valid:
there exist~$0 < \alpha_* < \alpha^*$ independent of~$\hE$ such that
\begin{align}
    \label{lower-boud:2D}
    & \alpha_* \SemiNorm{\vbfn}{1,\E}^2 \le \SE(\vbfn,\vbfn) \qquad \forall \vbfn \in \VbfnE,  \\
    \label{upper-boud:2D}
    & \SE(\vbfn,\vbfn) \le \alpha^* \SemiNorm{\vbfn}{1,\E}^2  \qquad \forall \vbfn \in [H^1(\E)]^2 \text{ such that } \int_{\partial \E} \vbfn= \mathbf 0.
\end{align}
Bounds~\eqref{lower-boud:2D} and~\eqref{upper-boud:2D} are valid for functions in~$\VbfnE \cap \ker(\Piboldnabla)$.
\end{thm}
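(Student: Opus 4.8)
The plan is to prove the two bounds by rather different means: \eqref{upper-boud:2D} follows from classical Poincar\'e and trace inequalities on star-shaped domains, whereas \eqref{lower-boud:2D} exploits the PDE structure of $\VbfnE$ together with a non-degeneracy property of the local Stokes operator. Throughout I will use that every $\vbfn\in\VbfnE$ carries an associated pressure $s\in L^2_0(\E)$ and momentum datum $\qpmth\in\Pbb_{\p-3}(\E)$ for which the weak form of \eqref{local-auxiliary-problem} gives $(\nablabold\vbfn,\nablabold\wbf)_{0,\E}+(s,\div\wbf)_{0,\E}=(\xperp\qpmth,\wbf)_{0,\E}$ for all $\wbf\in[H^1_0(\E)]^2$, that $\div\vbfn\in\Pbb_{\p-1}(\E)$, and that $(\xperp q,\vbfn)_{0,\E}=(\xperp q,\Piboldzperp\vbfn)_{0,\E}$ for all $q\in\Pbb_{\p-3}(\E)$ by the very definition of $\Piboldzperp$.

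For the upper bound, fix $\vbfn\in[H^1(\E)]^2$ with $\int_{\partial\E}\vbfn=\mathbf 0$. One has $\Norm{\div\vbfn}{0,\E}^2\le 2\SemiNorm{\vbfn}{1,\E}^2$ trivially; a Poincar\'e inequality for functions with vanishing boundary mean (uniform over star-shaped elements by mesh regularity) gives $\Norm{\vbfn}{0,\E}\lesssim\hE\SemiNorm{\vbfn}{1,\E}$, hence $\hE^{-2}\Norm{\Piboldzperp\vbfn}{0,\E}^2\le\hE^{-2}\Norm{\vbfn}{0,\E}^2\lesssim\SemiNorm{\vbfn}{1,\E}^2$; and the multiplicative scaled trace inequality $\Norm{\vbfn}{0,\partial\E}^2\lesssim\hE^{-1}\Norm{\vbfn}{0,\E}^2+\hE\SemiNorm{\vbfn}{1,\E}^2$ combined with the same Poincar\'e bound controls the boundary term. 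Summing yields \eqref{upper-boud:2D} with $\alpha^*$ depending only on $\rho$ (and $\p$). Since $\ker(\Piboldnabla)\subset\{\,\vbfn:\int_{\partial\E}\vbfn=\mathbf 0\,\}$ by \eqref{H1-projectors-bulk}, this covers in particular $\VbfnE\cap\ker(\Piboldnabla)$.

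For the lower bound fix $\vbfn\in\VbfnE$, write $\gbf:=\vbfn_{|\partial\E}$, $q:=\div\vbfn$, $c:=|\E|^{-1}\int_\E q=|\E|^{-1}\int_{\partial\E}\gbf\cdot\nbfE$, and split $\vbfn=\vbf_g+\vbf_q+\vbf_d$ by linearity of the local Stokes solution operator: $\vbf_g$ solves \eqref{local-auxiliary-problem} with boundary datum $\gbf$, zero momentum datum and divergence $c$; $\vbf_q\in[H^1_0(\E)]^2$ solves it with zero boundary datum, momentum datum $\qpmth$ and zero divergence; $\vbf_d\in[H^1_0(\E)]^2$ solves it with zero boundary datum, zero momentum datum and divergence $q-c$. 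Each subproblem is well posed on $\E$ (star-shaped with respect to a ball), and the sum reproduces the data of $\vbfn$, so the identity holds. Standard local Stokes stability on $\E$ (via the Bogovskii operator) and a boundary-trace Poincar\'e inequality give $\SemiNorm{\vbf_d}{1,\E}\lesssim\Norm{q-c}{0,\E}\le\Norm{\div\vbfn}{0,\E}$ and $\Norm{\vbf_d}{0,\E}\lesssim\hE\Norm{\div\vbfn}{0,\E}$, as well as $\SemiNorm{\vbf_g}{1,\E}\lesssim\Norm{\gbf}{H^{1/2}(\partial\E)}+\hE|c|\lesssim\hE^{-1/2}\Norm{\gbf}{0,\partial\E}$ and $\Norm{\vbf_g}{0,\E}\lesssim\hE^{1/2}\Norm{\gbf}{0,\partial\E}$, where the crucial ingredient $\Norm{\gbf}{H^{1/2}(\partial\E)}\lesssim\hE^{-1/2}\Norm{\gbf}{0,\partial\E}$ is the standard discrete inverse trace estimate for continuous piecewise-$\Pbb_\p$ boundary data. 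For $\vbf_q$ the pressure term drops because $\div\vbf_q=0$, so the energy identity reads $\SemiNorm{\vbf_q}{1,\E}^2=(\xperp\qpmth,\vbf_q)_{0,\E}=(\xperp\qpmth,\Piboldzperp\vbf_q)_{0,\E}\le\Norm{\xperp\qpmth}{0,\E}\Norm{\Piboldzperp\vbf_q}{0,\E}$; combined with the non-degeneracy estimate below this gives $\SemiNorm{\vbf_q}{1,\E}\lesssim\hE^{-1}\Norm{\Piboldzperp\vbf_q}{0,\E}$, and since $\Piboldzperp\vbf_q=\Piboldzperp\vbfn-\Piboldzperp\vbf_g-\Piboldzperp\vbf_d$ and $\Norm{\Piboldzperp\cdot}{0,\E}\le\Norm{\cdot}{0,\E}$, the bounds above yield $\SemiNorm{\vbf_q}{1,\E}\lesssim\hE^{-1}\Norm{\Piboldzperp\vbfn}{0,\E}+\hE^{-1/2}\Norm{\gbf}{0,\partial\E}+\Norm{\div\vbfn}{0,\E}$. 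Adding the three pieces and squaring produces \eqref{lower-boud:2D}.

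It remains to prove the non-degeneracy estimate, which I expect to be the main obstacle: if $\mathbf z\in[H^1_0(\E)]^2$ is divergence free and solves the local Stokes problem with forcing $\fbf:=\xperp r$, $r\in\Pbb_{\p-3}(\E)$, then $\hE\Norm{\fbf}{0,\E}\lesssim\SemiNorm{\mathbf z}{1,\E}$. The point is that $\xperp\Pbb_{\p-3}(\E)$ contains no nonzero gradient, so the local Stokes solution operator is positive definite on this finite-dimensional space; one can either invoke a scaling argument (the constant then depending only on $\p$ and $\rho$), or argue explicitly by testing with the divergence-free field $\phibold:=\curlbold(b_\E^2\,\rot\fbf)\in[H^1_0(\E)]^2$, where $b_\E$ is a smooth bubble supported in an inscribed ball of $\E$ with $\Norm{D^k b_\E}{L^\infty(\E)}\lesssim\hE^{-k}$: using $(\xperp\qpmth,\curlbold\psi)_{0,\E}=(\rot(\xperp\qpmth),\psi)_{0,\E}$ for $\psi\in H^2_0(\E)$ together with polynomial inverse estimates and the equivalence of polynomial $L^2$-norms on concentric balls, one gets $\Norm{\rot\fbf}{0,\E}^2\lesssim(\fbf,\phibold)_{0,\E}=(\nablabold\mathbf z,\nablabold\phibold)_{0,\E}\le\SemiNorm{\mathbf z}{1,\E}\SemiNorm{\phibold}{1,\E}\lesssim\SemiNorm{\mathbf z}{1,\E}\hE^{-2}\Norm{\rot\fbf}{0,\E}$, and concludes via $\Norm{\xperp r}{0,\E}\le\hE\Norm{r}{0,\E}\lesssim\hE\Norm{\rot(\xperp r)}{0,\E}$, the last inequality because $r\mapsto\rot(\xperp r)=-(2r+\xbf\cdot\nablabold r)$ is invertible on $\Pbb_{\p-3}(\E)$ (an Euler-operator computation). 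All remaining ingredients (Poincar\'e, scaled and boundary traces, local Stokes well-posedness and stability, discrete inverse trace, polynomial inverse estimates) are standard and uniform over the mesh family by the regularity assumptions; the three-dimensional statement is obtained by the analogous decomposition, with faces replacing edges.
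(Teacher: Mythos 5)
Your proof is correct, and while your upper bound \eqref{upper-boud:2D} is obtained exactly as in the paper (Poincar\'e for zero boundary mean, $L^2$-stability of $\Piboldzperp$, scaled trace inequality), your lower bound takes a genuinely different route. The paper works directly on the identity
$\SemiNorm{\vbfn}{1,\E}^2 = (\vbfn,(\nablabold\vbfn+\Ibf s)\nbfE)_{0,\partial\E}+(\Piboldzperp\vbfn,\xperp\qpmth)_{0,\E}-(\div\vbfn,s)_{0,\E}$,
obtained by integrating by parts against the momentum equation, and then needs three preliminary bounds ($\SemiNorm{\Deltabold\vbfn}{-1,\E}\lesssim\SemiNorm{\vbfn}{1,\E}$, the bubble-function estimate $\hE\Norm{\xperp\qpmth}{0,\E}\lesssim\SemiNorm{\vbfn}{1,\E}$, and the inf-sup bound $\Norm{s}{0,\E}\lesssim\SemiNorm{\vbfn}{1,\E}$ on the auxiliary pressure) together with the $H(\div)$ trace inequality applied to $\sigmabold=\nablabold\vbfn+\Ibf s$. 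You instead split $\vbfn=\vbf_g+\vbf_q+\vbf_d$ by linearity of the local Stokes solution operator and control each piece by exactly one term of $\SE(\cdot,\cdot)$: $\vbf_g$ by the boundary term (extension plus the discrete inverse trace estimate), $\vbf_d$ by the divergence term (uniform local inf-sup/Bogovskii stability), and $\vbf_q$ by the orthogonal-moment term. The two arguments share their only genuinely delicate ingredient, namely the bubble-function inverse estimate yielding $\hE\Norm{\xperp\qpmth}{0,\E}\lesssim\SemiNorm{\cdot}{1,\E}$ (you apply it to the divergence-free piece $\vbf_q$, the paper to $\vbfn$ itself; your explicit test field $\curlbold(b_\E^2\rot\fbf)$ and the Euler-operator computation for the injectivity of $\rot$ on $\xperp\Pbb_{\p-3}(\E)$ are consistent with the paper's more tersely stated Preliminary Fact~2), and both use the inverse trace inequality $\Norm{\vbfn}{\frac12,\partial\E}\lesssim\hE^{-\frac12}\Norm{\vbfn}{0,\partial\E}$ for the piecewise polynomial trace. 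What your decomposition buys is a clean accounting of which stabilization term controls which component of the solution, and it bypasses the auxiliary pressure bound and the $H(\div)$ trace inequality entirely; the price is that you must invoke uniform (in $\E$) well-posedness of three local Stokes subproblems, including one with inhomogeneous boundary data, hence a uniform trace-extension operator on star-shaped polygons -- standard under the stated mesh regularity, but one more prerequisite than the paper's direct computation requires. Your handling of the cross-terms ($\Piboldzperp\vbf_q=\Piboldzperp\vbfn-\Piboldzperp\vbf_g-\Piboldzperp\vbf_d$, absorbed through the $L^2$ bounds $\Norm{\vbf_g}{0,\E}\lesssim\hE^{1/2}\Norm{\vbfn}{0,\partial\E}$ and $\Norm{\vbf_d}{0,\E}\lesssim\hE\Norm{\div\vbfn}{0,\E}$) closes the argument correctly.
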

\begin{proof}
We begin by proving~\eqref{lower-boud:2D}
splitting its proof into four steps.
Let~$\vbfn \in \VbfnE$ solve~\eqref{local-auxiliary-problem} and~$s \in L^2_0(\E)$ be the associated auxiliary pressure.

\medskip

\noindent\textbf{Preliminary fact~$1$.}
We first observe that an integration by parts yields
\begin{equation} \label{proof-stab-fact-1}
\Norm{\Deltabold \vbfn}{-1,\E}
:= \sup_{\mathbf 0 \ne \Phibold\in H^1_0(\E)} \frac{(\Deltabold \vbfn,\Phibold)_{0,\E}}{\Norm{\Phibold}{1,\E}}
\lesssim  \SemiNorm{\vbfn}{1,\E}.
\end{equation}

\medskip

\noindent\textbf{Preliminary fact~$2$.}
If~$\rot(\xperp \qpmth)=0$, then~$\xperp \qpmth = \mathbf 0$.
To see this, we first recall the Helmholtz-type decomposition for polynomials in two dimensions \cite[Section~2]{BeiraoDaVeiga-Brezzi-Marini-Russo:2016}
\[
\Pbb_\ell(\E)
= \nabla \Pbb_{\ell+1}(\E)
    \oplus \xperp \Pbb_{\ell-1}(\E)
    \qquad \forall \ell\in \Nbb.
\]
Using next~\cite[eq.~(2.10)]{BeiraoDaVeiga-Brezzi-Marini-Russo:2016}, we have the property
\[
\left\{ \vbf \in [\Pbb_{\ell}(\E)]^2 \right\}
\quad \Longrightarrow \quad
\left\{ \rot \vbf = 0 \Longleftrightarrow 
        \vbf = \nabla q_{\ell+1} 
        \text{ for some } q_{\ell+1} \in \Pbb_{\ell+1}(\E) \right\}.
\]
This proves that~$\xperp \qpmth = 0$.

From the regularity assumptions on the mesh,
we know that there exists a ball~$\BE$ inside~$\E$
with diameter comparable to~$\hE$.
Thus, using \cite[Lemma~$6.1$]{BeiraoDaVeiga-Lovadina-Russo:2017}
and equivalence of Sobolev norms for spaces of polynomials with finite maximum degree on a ball,
we can write
\begin{equation} \label{an-equivalence-polynomials}
\Norm{\xperp \qpmth}{0,\E} 
\lesssim \Norm{\xperp \qpmth}{0,\BE}
\lesssim \hE \Norm{\rot(\xperp \qpmth)}{0,\BE}
\le \hE \Norm{\rot(\xperp \qpmth)}{0,\E}.
\end{equation}
In light of this, we show an auxiliary bound on the~$L^2$ norm of the right-hand side in~\eqref{local-auxiliary-problem}.
Using~\eqref{an-equivalence-polynomials} and the first line in~\eqref{local-auxiliary-problem}, we write
\[
\Norm{\xperp \qpmth}{0,\E}
\lesssim \hE \Norm{\rot(\Deltabold \vbfn)}{0,\E}.
\]
Next, let~$\bE$ denote the piecewise cubic bubble function over the sub tessellation~$\tautildenE$ of~$\E$
introduced in Remark~\ref{remark:subtessellation}
such that~$\Norm{\bE}{\infty,\Ttilde}=1$ for all~$\Ttilde$ in~$\tautildenE$.
Since~$\rot(\Deltabold \vbfn)$ is a polynomial,
the following polynomial inverse estimate involving bubbles holds true:
\[
\Norm{\rot(\Deltabold \vbfn)}{0,\E}^2
\lesssim \Norm{\bE \rot(\Deltabold \vbfn)}{0,\E}^2.
\]
Integrating by parts twice,
observing that~$(\bE)^2$ and its normal derivative are zero over the boundary of each~$\Ttilde$ in~$\tautildenE$,
and using the Cauchy-Schwarz inequality and a polynomial inverse inequality twice,
we arrive at
\[
\begin{split}
\Norm{\rot(\Deltabold \vbfn)}{0,\E}^2
& \lesssim \left(\rot(\Deltabold \vbfn),(\bE)^2 \rot(\Deltabold \vbfn) \right)_{0,\E}
 = \left( \nablabold \vbfn, \nablabold \curlbold ((\bE)^2 \rot(\Deltabold \vbfn) )  \right)_{0,\E}\\[4pt]
& \lesssim \SemiNorm{\vbfn}{1,\E}\; \hE^{-2} \Norm{(\bE)^2 \rot(\Deltabold \vbfn)}{0,\E}
  \le \SemiNorm{\vbfn}{1,\E}\; \hE^{-2} \Norm{\rot(\Deltabold \vbfn)}{0,\E}.
\end{split}
\]
Combining the two above bounds yields
\begin{equation} \label{proof-stab-fact-2}
\Norm{\xperp \qpmth}{0,\E} \lesssim  \hE^{-1} \SemiNorm{\vbfn}{1,\E}.
\end{equation}

\medskip

\noindent\textbf{Preliminary fact~$3$.}
We show an upper bound on the auxiliary pressure~$s$ in~\eqref{local-auxiliary-problem}.
To this aim, we first observe that a scaled Poincar\'e inequality entails
\begin{equation} \label{H-1-L2}
\Norm{\vbf}{-1,\E} \lesssim \hE \Norm{\vbf}{0,\E} \qquad \forall \vbf \in [L^2(\E)]^2.
\end{equation}
The standard inf-sup condition~\cite{Boffi-Brezzi-Fortin:2013} for the couple~$[H^1_0(\E)]^2 \times L^2_0(\E)$
states that
\[
\Norm{s}{0,\E}
\lesssim \sup_{\mathbf 0 \ne \Phibold \in [H^1_0(\E)]^2} \frac{(s,\div \Phibold)_{0,\E}}{\SemiNorm{\Phibold}{1,\E}}.
\]
Integrating by parts,
and using the triangle inequality, the first equation in~\eqref{local-auxiliary-problem},
\eqref{H-1-L2}, \eqref{proof-stab-fact-1}, and~\eqref{proof-stab-fact-2},
we deduce
\begin{equation} \label{proof-stab-fact-3}
\begin{split}
\Norm{s}{0,\E}
& \lesssim \sup_{\mathbf 0 \ne \Phibold \in [H^1_0(\E)]^2} \frac{(\nabla s, \Phibold)_{0,\E}}{\SemiNorm{\Phibold}{1,\E}} 
  = \Norm{\nabla s}{-1,\E} 
\le \Norm{\xperp \qpmth}{-1,\E} + \Norm{\Deltabold \vbfn}{-1,\E}\\[4pt]
&  \lesssim \hE \Norm{\xperp \qpmth}{0,\E} + \SemiNorm{\vbfn}{1,\E}
  \lesssim  \SemiNorm{\vbfn}{1,\E}.
\end{split}
\end{equation}

\medskip

\noindent\textbf{Proving the lower bound~\eqref{lower-boud:2D}.}
We integrate by parts, use the first equation in~\eqref{local-auxiliary-problem}, denote the~$2\times2$ identity matrix by~$\Ibf$,
integrate by parts again, use the definition of~$\Piboldzperp$ in~\eqref{L2-projectors}, and deduce
\begin{equation} \label{stellina}
\begin{split}
\SemiNorm{\vbfn}{1,\E}^2
& = (\nablabold \vbfn, \nablabold \vbfn)_{0,\E}
= (\vbfn, (\nablabold \vbfn)\nbfE)_{0,\partial \E} + (\vbfn, \xperp \qpmth + \nabla s)_{0,\E}\\[4pt]
& = (\vbfn, (\nablabold \vbfn + \Ibf s)\nbfE)_{0,\partial \E}
    + (\Piboldzperp \vbfn,\xperp \qpmth)_{0,\E}
    - (\div \vbfn, s)_{0,\E}.
\end{split}
\end{equation}
Introduce~$\sigmabold := \nablabold \vbfn + \Ibf s$.
Due to the first equation in~\eqref{local-auxiliary-problem}, $\div \sigmabold = -\xperp \qpmth$.

Using~\eqref{proof-stab-fact-2} and~\eqref{proof-stab-fact-3} in~\eqref{stellina} yields
\[
\SemiNorm{\vbfn}{1,\E}^2
\lesssim \left( \hE^{-1} \Norm{\Piboldzperp \vbfn}{0,\E} + \Norm{\div \vbfn}{0,\E}  \right) \SemiNorm{\vbfn}{1,\E}
+ \Norm{\vbfn}{\frac12,\partial\E} \Norm{(\sigmabold) \nbfE}{-\frac12,\partial\E}.
\]
Applying the divergence trace inequality \cite[Section~$3.5.2$]{Monk:2003}
and a polynomial inverse inequality on~$\partial\E$
(recall that~$\vbfn$ is a piecewise polynomial over~$\partial \E$)
gives
\[
\begin{split}
\SemiNorm{\vbfn}{1,\E}^2
& \lesssim \left( \hE^{-1} \Norm{\Piboldzperp \vbfn}{0,\E} + \Norm{\div \vbfn}{0,\E}  \right) \SemiNorm{\vbfn}{1,\E}
         \!+\! \hE^{-\frac12}\Norm{\vbfn}{0,\partial\E} \left( \Norm{\sigmabold}{0,\E} + \hE \Norm{\div \sigmabold}{0,\E}  \right) \\[4pt]
& \lesssim \left( \hE^{-1} \Norm{\Piboldzperp \vbfn}{0,\E} \!+\! \Norm{\div \vbfn}{0,\E}  \right) \SemiNorm{\vbfn}{1,\E}
         \!+\! \hE^{-\frac12}\Norm{\vbfn}{0,\partial\E} \left( \Norm{\sigmabold}{0,\E} \!+\! \hE \Norm{\xperp \qpmth}{0,\E}  \right) \!.
\end{split}
\]
On the other hand, the triangle inequality and~\eqref{proof-stab-fact-3} entail
\[
\Norm{\sigmabold}{0,\E}
\le \SemiNorm{\vbfn}{1,\E} + \Norm{s}{0,\E}
\lesssim \SemiNorm{\vbfn}{1,\E}.
\]
Combining the two above estimates and recalling~\eqref{proof-stab-fact-2} leads to~\eqref{lower-boud:2D}.

\medskip

Next, we prove the upper bound~\eqref{upper-boud:2D}.
Let~$\vbfn \in [H^1(\E)]^2$ with zero average over~$\partial\E$.
We have to show an upper bound of the three terms on the right-hand side of~\eqref{explicit:stabilization-2D} in terms of~$\SemiNorm{\vbfn}{1,\E}$.

As for~$\hE^{-2}\Norm{\Piboldzperp \vbfn}{0,\E}^2$,
it suffices to use the stability of the $L^2$ projector and the scaled Poincar\'e inequality;
we can estimate~$\Norm{\div \vbfn}{0,\E}^2$ by~$\SemiNorm{\vbfn}{1,\E}$ directly;
we control $\hE^{-1}\Norm{\vbfn}{0,\partial \E}^2$ by using the trace and the Poincar\'e inequalities.
\end{proof}

\begin{remark} \label{remark:alternative-stab:boundary}
Let~$\{ \dofB_j \}$ denote the set of boundary DoFs in~$\VbfnE$,
i.e., the DoFs of type \textbf{Dv}$_1$ and \textbf{Dv}$_2$.
Following, e.g., \cite[eq.~$(2.14)$ with~$\alpha=0$]{Bernardi-Maday:1992}
and recalling that the edge degrees of freedom are vector values at Gau\ss-Lobatto nodes,
the boundary contribution~$\hE^{-1} (\ubfn,\vbfn)_{0,\partial \E}$ in~\eqref{explicit:stabilization-2D} can be replaced by the equivalent term
\[
\sum_{j} \dofB_j(\ubfn) \; \dofB_j(\vbfn).
\]
Thus, the stabilization in~\eqref{explicit:stabilization-2D} is spectrally equivalent to
\[
\hE^{-2} (\Piboldzperp \ubfn, \Piboldzperp \vbfn)_{0,\E}
   + (\div \ubfn, \div \vbfn)_{0,\E}
   + \sum_{j} \dofB_j(\ubfn) \dofB_j(\vbfn).
\]
\eremk
\end{remark}

\subsection{Stability estimates for the ``dofi-dofi'' stabilization} \label{subsection:stability-2D:dofi-dofi}
In this section, we prove stability estimates for the classical ``dofi-dofi'' stabilization
\begin{equation} \label{dofi-dofi:stab:2D}
\SED(\ubfn,\vbfn)
:= \sum_{j=1}^{\dim(\VbfnE)} \dof_j(\ubfn) \dof_j(\vbfn),
\end{equation}
where the set~$\{ \dof_j \}$ collects
the sets~$\{ \dofB_j \}$, $\{ \dofperp_j \}$, and~$\{ \dofdiv_j \}$
of boundary ($\textbf{Dv}_1$ and~$\textbf{Dv}_2$),
``orthogonal''  ($\textbf{Dv}_3$),
and divergence ($\textbf{Dv}_4$) DoFs, respectively.

We recall the following technical result; see, e.g., \cite[Lemma~$4.1$]{Chen-Huang:2018}.
\begin{lem} \label{lemma:equivalence-polynomials}
Let~$\{\mboldalphabold\}$ be a set of linearly independent polynomials of maximum degree~$\p\in\Nbb$ over a polygon~$\E$, that are invariant with respect to translation and dilation.
For every polynomial~$\qbfp \in [\Pbb_\p(\E)]^d$, $d=2,3$,
consider the decomposition~$\qbfp = \sum_{\boldalpha} (\overrightarrow{\qbfp})_{\boldalpha} \mboldalphabold$,
where~$\overrightarrow{\qbfp}$ denotes the coefficient vector of~$\qbfp$ with respect to the basis~$\{\mboldalphabold\}$.
Then, the following equivalence of norms is valid:
\begin{equation} \label{equivalence:polynomials:l2}
\hE \Norm{\overrightarrow{\qbfp}}{\ell^2}
\lesssim \Norm{\qbfp}{0,\E}
\lesssim \hE \Norm{\overrightarrow{\qbfp}}{\ell^2}.
\end{equation}
\end{lem}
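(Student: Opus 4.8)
The plan is to reduce~\eqref{equivalence:polynomials:l2} to a fixed reference configuration, exploiting the invariance of the basis $\{\mboldalphabold\}$ under translations and dilations. I would introduce the affine map $\xbf \mapsto \widehat\xbf := (\xbf - \xbf_\E)/\hE$, which sends $\E$ onto a polygon $\widehat\E$ of unit diameter; by the assumed invariance each $\mboldalphabold$ is carried onto a fixed polynomial $\widehat{\mathbf m}_{\boldalpha}$ not depending on $\E$, so the coefficient vector $\overrightarrow\qbfp$ is left unchanged and $\widehat\qbfp := \sum_{\boldalpha}(\overrightarrow\qbfp)_{\boldalpha}\widehat{\mathbf m}_{\boldalpha}$ satisfies, by the change of variables $|d\xbf| = \hE^2 |d\widehat\xbf|$, the scaling identity $\Norm{\qbfp}{0,\E} = \hE \Norm{\widehat\qbfp}{0,\widehat\E}$. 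Since the barycenter lies in the convex hull of $\E$, which has the same diameter as $\E$, one also has $\widehat\E \subset \overline{B(\mathbf 0,1)}$. It therefore suffices to prove $\Norm{\widehat\qbfp}{0,\widehat\E} \approx \Norm{\overrightarrow\qbfp}{\ell^2}$ with hidden constants depending only on $\p$, $d$, the fixed basis, and the mesh regularity parameter $\rho$.

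For the upper bound I would argue directly: from $\widehat\E \subset B(\mathbf 0,1)$ and the triangle and Cauchy--Schwarz inequalities,
\[
\Norm{\widehat\qbfp}{0,\widehat\E}
\le \sum_{\boldalpha} \vert(\overrightarrow\qbfp)_{\boldalpha}\vert\, \Norm{\widehat{\mathbf m}_{\boldalpha}}{0,B(\mathbf 0,1)}
\le \Big( \sum_{\boldalpha} \Norm{\widehat{\mathbf m}_{\boldalpha}}{0,B(\mathbf 0,1)}^{2} \Big)^{1/2} \Norm{\overrightarrow\qbfp}{\ell^2},
\]
and the finite sum on the right is a constant fixed by the basis.

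The lower bound is the only delicate point, since $\Norm{\widehat\qbfp}{0,\widehat\E}$ still depends on the a priori unknown shape of $\widehat\E$. Here I would invoke the mesh regularity: $\widehat\E$ contains a disk $B(\xbf^*,\rho/2)$ with centre $\xbf^* \in \widehat\E \subset \overline{B(\mathbf 0,1)}$. Then $\Norm{\widehat\qbfp}{0,\widehat\E}^2 \ge \Norm{\widehat\qbfp}{0,B(\xbf^*,\rho/2)}^2 = \overrightarrow\qbfp^{\top} M(\xbf^*)\,\overrightarrow\qbfp$, where $M(\xbf^*)$ is the Gram matrix of $\{\widehat{\mathbf m}_{\boldalpha}\}$ in $L^2(B(\xbf^*,\rho/2))$. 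Linear independence of the basis forces $M(\xbf^*)$ to be symmetric positive definite (a nonzero polynomial cannot vanish on an open disk), while its entries depend continuously on $\xbf^*$; hence $\lambda_{\min}(M(\xbf^*))$ is a positive continuous function on the compact set $\overline{B(\mathbf 0,1)}$, and $\gamma := \min_{\xbf^*}\lambda_{\min}(M(\xbf^*)) > 0$ depends only on $\rho$, $\p$, $d$, and the basis. This gives $\Norm{\widehat\qbfp}{0,\widehat\E}^2 \ge \gamma\,\Norm{\overrightarrow\qbfp}{\ell^2}^2$, and combining with the upper bound and the scaling identity proves~\eqref{equivalence:polynomials:l2}.

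The main (and essentially only) obstacle is the geometry dependence of the reference $L^2$ norm in the lower bound; the compactness argument on the Gram matrix above neutralizes it using only the inscribed-disk property encoded in the regularity assumptions. Alternatively, one may simply cite~\cite[Lemma~4.1]{Chen-Huang:2018}, where the statement is established in the same spirit.
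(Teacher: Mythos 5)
Your proof is correct, but note that the paper does not actually prove this lemma: it is recalled as a known technical result with a pointer to Lemma~4.1 of Chen--Huang, followed only by the remark that under the mesh regularity assumptions the hidden constants are uniform in~$\E$. Your argument supplies the details along the standard route: rescale to $\widehat\E := (\E-\xbf_\E)/\hE \subset \overline{B(\mathbf 0,1)}$, use the translation/dilation invariance of the basis so that the coefficient vector is unchanged and the mapped basis $\{\widehat{\mathbf m}_{\boldalpha}\}$ is a fixed family, obtain the upper bound from the triangle and Cauchy--Schwarz inequalities against that fixed family, and obtain the lower bound from the positive definiteness of the Gram matrix $M(\xbf^*)$ on the inscribed disk of radius $\rho/2$ together with a compactness argument in the centre $\xbf^*\in\overline{B(\mathbf 0,1)}$. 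The two places where the regularity assumptions genuinely enter are exactly the ones you isolate ($\widehat\E\subset\overline{B(\mathbf 0,1)}$ for one direction, the inscribed disk for the other), and the continuity-plus-compactness argument for $\lambda_{\min}(M(\xbf^*))$ is the right device to make the constant independent of the particular element shape; the positive definiteness itself rests, as you say, on the fact that a nontrivial linear combination of linearly independent polynomials cannot vanish on an open disk. One minor caveat: the scaling identity $\Norm{\qbfp}{0,\E}=\hE\Norm{\widehat\qbfp}{0,\widehat\E}$ uses the two-dimensionality of~$\E$ (Jacobian $\hE^2$), which is consistent with the lemma being stated over a polygon; the analogous statement used implicitly for polyhedral quantities in Section~3 would carry $\hE^{3/2}$ instead of $\hE$. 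In short, your proposal is a correct, self-contained proof of a statement the paper only cites.
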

Under the mesh regularity assumption in Section~\ref{section:introduction},
the hidden constant in~\eqref{equivalence:polynomials:l2}
are uniform with respect to the element~$\E$.

Next, we prove the following stability result, based on the techniques developed in the proof of Theorem~\ref{theorem:stability-2D}.
\begin{thm} \label{theorem:stability-2D:dofi-dofi}
The following stability bounds are valid:
there exist~$0 < \alpha_* < \alpha^*$ independent of~$\hE$ such that
\begin{align}
    \label{lower-boud:2D:dofi-dofi}
    & \alpha_* \SemiNorm{\vbfn}{1,\E}^2 \le \SED(\vbfn,\vbfn) \qquad \forall \vbfn \in \VbfnE,  \\
    \label{upper-boud:2D:dofi-dofi}
    & \SED(\vbfn,\vbfn) \le \alpha^* \SemiNorm{\vbfn}{1,\E}^2  \qquad \forall \vbfn \in \VbfnE \text{ such that } \int_{\partial \E} \vbfn= \mathbf 0.
\end{align}
Bounds~\eqref{lower-boud:2D:dofi-dofi} and~\eqref{upper-boud:2D:dofi-dofi} are valid for functions in~$\VbfnE \cap \ker(\Piboldnabla)$.
\end{thm}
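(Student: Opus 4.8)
The plan is to transfer the stability bounds for the projection-based stabilization~$\SE$ from Theorem~\ref{theorem:stability-2D} to the ``dofi-dofi'' stabilization~$\SED$ by showing the two bilinear forms are spectrally equivalent on~$\VbfnE$, with constants independent of~$\hE$. Since the DoFs split into boundary DoFs $\{\dofB_j\}$ (types \textbf{Dv}$_1$, \textbf{Dv}$_2$) and internal DoFs (types \textbf{Dv}$_3$, \textbf{Dv}$_4$), I would handle these two groups separately and then combine. By Remark~\ref{remark:alternative-stab:boundary}, the boundary part $\sum_j \dofB_j(\vbfn)\dofB_j(\vbfn)$ of~$\SED$ is already known to be spectrally equivalent to $\hE^{-1}\Norm{\vbfn}{0,\partial\E}^2$, which is the third term of~$\SE$. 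So the real work is to show that the internal DoF contributions of~$\SED$, namely $\sum_j (\textbf{Dv}_3(\vbfn))_j^2 + \sum_j(\textbf{Dv}_4(\vbfn))_j^2$, are controlled from above and below by $\hE^{-2}\Norm{\Piboldzperp\vbfn}{0,\E}^2 + \Norm{\div\vbfn}{0,\E}^2$ (plus, in the upper bound, $\SemiNorm{\vbfn}{1,\E}^2$), modulo the boundary term.

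The key computation is the identification of the internal DoFs with scaled polynomial coefficients. For the \textbf{Dv}$_3$ moments: writing $\Piboldzperp\vbfn = \sum_{\boldalpha}(\overrightarrow{\Piboldzperp\vbfn})_{\boldalpha}\qbfboldalpha$ in the basis $\{\qbfboldalpha\}$ of $\xperp\Pbb_{\p-3}(\E)$, the definition~\eqref{complementary:moments} of $\textbf{Dv}_3$ together with the orthogonality~\eqref{L2-projectors} shows that the vector $\{\textbf{Dv}_3(\vbfn)\}$ equals $\vert\E\vert^{-1}$ times the Gram matrix (in the $\{\qbfboldalpha\}$ basis, scaled) applied to $\overrightarrow{\Piboldzperp\vbfn}$; invoking Lemma~\ref{lemma:equivalence-polynomials} and a scaling argument (the Gram matrix of a scaled, translation-invariant basis has condition number bounded independently of~$\E$, and its entries scale like $\vert\E\vert$) yields $\sum_j(\textbf{Dv}_3(\vbfn))_j^2 \approx \hE^{-2}\Norm{\Piboldzperp\vbfn}{0,\E}^2$. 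Similarly, since $\div\vbfn$ is a polynomial in $\Pbb_{\p-1}(\E)$ computable from the DoFs, and its average over~$\E$ is $\vert\E\vert^{-1}\int_{\partial\E}\vbfn\cdot\nbfE$ controlled by the boundary term, the $\textbf{Dv}_4$ moments~\eqref{divergence:moments} encode the coefficients of $\div\vbfn$ minus its mean against the basis $\{\malphabold\}$ of $\Pbb_{\p-1}(\E)\setminus\Rbb$; Lemma~\ref{lemma:equivalence-polynomials} again gives $\sum_j(\textbf{Dv}_4(\vbfn))_j^2 \approx \Norm{\div\vbfn}{0,\E}^2$ up to the boundary contribution $\hE^{-1}\Norm{\vbfn}{0,\partial\E}^2$ (the $\hE^2/\vert\E\vert^2$ in~\eqref{divergence:moments} combined with the $\vert\E\vert$ scaling of the $L^2$ norm gives the right power of~$\hE$, using $\vert\E\vert\approx\hE^2$).

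Putting these together, $\SED(\vbfn,\vbfn) \approx \hE^{-2}\Norm{\Piboldzperp\vbfn}{0,\E}^2 + \Norm{\div\vbfn}{0,\E}^2 + \hE^{-1}\Norm{\vbfn}{0,\partial\E}^2 = \SE(\vbfn,\vbfn)$ for all $\vbfn\in\VbfnE$, with constants independent of~$\hE$ by mesh regularity; the lower bound~\eqref{lower-boud:2D:dofi-dofi} then follows from~\eqref{lower-boud:2D}, and for the upper bound~\eqref{upper-boud:2D:dofi-dofi}, restricting to $\vbfn\in\VbfnE$ with $\int_{\partial\E}\vbfn=\mathbf 0$, the boundary term drops the mean issue and~\eqref{upper-boud:2D} (applicable since such $\vbfn\in[H^1(\E)]^2$) closes the argument. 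The final sentence about $\VbfnE\cap\ker(\Piboldnabla)$ is immediate from~\eqref{H1-projectors-bulk}, which forces $\int_{\partial\E}\vbfn=\mathbf 0$ for functions in that kernel.

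The main obstacle I anticipate is making the spectral equivalence between the raw DoF squared-sums and the $L^2$-type quantities fully rigorous with $\hE$-independent constants: one must carefully track that the bases $\{\qbfboldalpha\}$ and $\{\malphabold\}$, being invariant under translation and dilation, have Gram matrices whose spectra (after the correct $\hE$-power rescaling dictated by Lemma~\ref{lemma:equivalence-polynomials}) are pinched between two positive constants depending only on~$\p$ and~$\rho$, and to correctly separate the mean-value part of $\div\vbfn$ — which is a pure boundary quantity — from its zero-average part when passing between $\Norm{\div\vbfn}{0,\E}^2$ and $\sum_j(\textbf{Dv}_4(\vbfn))_j^2$. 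This bookkeeping is routine given Lemma~\ref{lemma:equivalence-polynomials} and Remark~\ref{remark:alternative-stab:boundary}, but it is where the argument must be written with care.
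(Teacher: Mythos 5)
Your proposal is correct, but it is organized differently from the paper's proof, which is worth noting. The paper never states or proves the full spectral equivalence $\SED(\vbfn,\vbfn)\approx\SE(\vbfn,\vbfn)$ that you take as your pivot (it only records the boundary part of that equivalence, in Remark~\ref{remark:alternative-stab:boundary}). Instead, for the lower bound it re-runs the integration-by-parts identity from Theorem~\ref{theorem:stability-2D}, writing $\SemiNorm{\vbfn}{1,\E}^2=A+B+C$ with $A=\int_\E\xperp\qpmth\cdot\vbfn$, $B=-\int_\E s\,\div\vbfn$, and $C$ the boundary term, and then expands $\xperp\qpmth$ in the basis $\{\qbfboldalpha\}$ and $\Pizpmo s$ in the basis $\{\mbetabold\}$ so that the DoF sums appear directly, closing the estimate with the preliminary bounds $\Norm{\xperp\qpmth}{0,\E}\lesssim\hE^{-1}\SemiNorm{\vbfn}{1,\E}$ and $\Norm{s}{0,\E}\lesssim\SemiNorm{\vbfn}{1,\E}$ from~\eqref{proof-stab-fact-2} and~\eqref{proof-stab-fact-3}; the upper bound is obtained by estimating each DoF type separately via $\Norm{\qbfboldalpha}{0,\E}\lesssim\hE$, $\Norm{\malphabold}{0,\E}\lesssim\hE$, and trace/Poincar\'e inequalities. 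Your route instead reduces everything to the purely algebraic statement that the internal DoF sums are equivalent to $\hE^{-2}\Norm{\Piboldzperp\vbfn}{0,\E}^2$ and to $\Norm{\div\vbfn-\overline{\div\vbfn}}{0,\E}^2$ (with the mean of the divergence correctly absorbed into the boundary term, since $\vert\E\vert\,\overline{\div\vbfn}^2\lesssim\hE^{-1}\Norm{\vbfn}{0,\partial\E}^2$), after which both bounds are inherited verbatim from Theorem~\ref{theorem:stability-2D}. What your approach buys is modularity: no analytic estimate on the auxiliary data $(\xperp\qpmth,s)$ needs to be revisited, and the same equivalence argument would apply to any stabilization built from the projections in~\eqref{explicit:stabilization-2D}. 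What it costs is the Gram-matrix bookkeeping you flag at the end, which is indeed the only delicate point and is handled by exactly the same Lemma~\ref{lemma:equivalence-polynomials} the paper uses; in particular the positive-definiteness of the Gram matrices with eigenvalues pinched between constants times $\hE^2$ follows from~\eqref{equivalence:polynomials:l2} and $\vert\E\vert\approx\hE^2$, so your argument goes through with $\hE$-independent constants under the stated mesh regularity.
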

\begin{proof}
We begin by proving the lower bound~\eqref{lower-boud:2D:dofi-dofi}.
Throughout, we use the same notation as in the proof of Theorem~\ref{theorem:stability-2D}.

We have
\begin{equation} \label{ABC}
\begin{split}
& \SemiNorm{\vbfn}{1,\E}^2\\
& = -\int_\E \Deltabold \vbfn \cdot \vbfn
    + \int_{\partial \E} (\nablabold \vbfn)\nbfE \cdot \vbfn
  = \int_\E (\nabla s + \xperp\qpmth) \cdot \vbfn
    + \int_{\partial \E} (\nablabold \vbfn)\nbfE \cdot \vbfn\\
& = \int_\E \xperp\qpmth \cdot \vbfn
    -\int_\E s \ \div \vbfn
    + \int_{\partial \E} (\Ibf s + \nablabold \vbfn)\nbfE \cdot  \vbfn
  = A + B + C.
\end{split}
\end{equation}
We estimate the three terms on the right-hand side separately.

Denote~$\xperp \qpmth$ by~$\gbf$ and consider the expansion
\[
\gbf := \sum_{\!\boldalpha} \overrightarrow{\gbf}_{\boldalpha} \qbfboldalpha,
\]
where~$\{ \qbfboldalpha \}$ is any basis of~$\xperp \Pbb_{\p-3}(\E)$ as in Lemma~\ref{lemma:equivalence-polynomials},
and~$\overrightarrow{\gbf}$ is the vector of the coefficients of~$\gbf$ with respect to the basis~$\{ \qbfboldalpha \}$.

We obtain
\[
\begin{split}
A 
& = \int_\E \xperp\qpmth \cdot \vbfn
  = \sum_{\boldalpha} \overrightarrow{\gbf}_{\!\boldalpha}
(\qbfboldalpha, \vbfn)_{0,\E}
   = \sum_{\boldalpha} \overrightarrow{\gbf}_{\!\boldalpha} 
  \vert\E\vert \dofperp_{\boldalpha}(\vbfn)\\
& \lesssim \hE^2 \Norm{\overrightarrow{\gbf}}{\ell^2}
               \Big(\sum_{\boldalpha} \dofperp_{\boldalpha}(\vbfn)^2 \Big)^{\frac12}
\overset{\eqref{equivalence:polynomials:l2}}{\lesssim}
        \hE \Norm{\gbf}{0,\E} \Big(\sum_{\boldalpha} \dofperp_{\boldalpha}(\vbfn)^2 \Big)^{\frac12}.
\end{split}
\]
Using the definition of~$\gbf$,
bound~\eqref{proof-stab-fact-2} gives
\[
\hE \Norm{\gbf}{0,\E}
= \hE \Norm{\xperp \qpmth}{0,\E}
\lesssim \SemiNorm{\vbfn}{1,\E}.
\]
Combining the two above bounds yields
\begin{equation} \label{bound:A}
A \lesssim 
\SemiNorm{\vbfn}{1,\E} \Big(\sum_{\boldalpha} \dofperp_{\boldalpha}(\vbfn)^2 \Big)^{\frac12}.
\end{equation}
Next, we focus on the term~$B$.
Recall that~$\div \vbfn \in \Pbb_{\p-1}(\E) \setminus \Rbb$ and set
\[
\Pizpmo s =: g = \sum_{\boldbeta} \overrightarrow{g}_{\!\boldbeta} \mbetabold,
\]
where~$\{ \mbetabold \}$ is any basis of~$\Pbb_{\p-1}(\E) \setminus \Rbb$ as in (the scalar version of) Lemma~\ref{lemma:equivalence-polynomials}.

Using~\eqref{equivalence:polynomials:l2} and~\eqref{proof-stab-fact-3},
we deduce
\begin{equation} \label{bound:B}
\begin{split}
B
& = \sum_{\boldbeta} \overrightarrow{g}_{\!\boldbeta} \int_\E \mbetabold \div \vbfn
\approx \hE \sum_{\boldbeta} \overrightarrow{g}_{\!\boldbeta} \dofdiv_{\boldbeta}(\vbfn)
    \le \hE \Norm{\overrightarrow{g}}{\ell^2} 
        \Big(\sum_{\boldbeta} \dofdiv_{\boldbeta}(\vbfn)^2 \Big)^{\frac12}\\
&  \lesssim \Norm{\Pizpmo s}{0,\E} 
\Big(\sum_{\boldbeta} \dofdiv_{\boldbeta}(\vbfn)^2 \Big)^{\frac12}
   \lesssim
   \SemiNorm{\vbfn}{1,\E} \Big(\sum_{\boldbeta} \dofdiv_{\boldbeta}(\vbfn)^2 \Big)^{\frac12}.
\end{split}
\end{equation}
The term~$C$ can be estimated as in the proof of Theorem~\ref{theorem:stability-2D}:
\begin{equation} \label{bound:C}
C \lesssim \hE^{-\frac12}\Norm{\vbfn}{0,\partial\E} \SemiNorm{\vbfn}{1,\E}.
\end{equation}
Inserting \eqref{bound:A}, \eqref{bound:B}, and~\eqref{bound:C} in~\eqref{ABC}, we deduce
\[
\SemiNorm{\vbfn}{1,\E}^2
\lesssim  \sum_{\boldalpha} \dofperp_{\boldalpha}(\vbfn)^2
          + \sum_{\boldbeta} \dofdiv_{\boldbeta}(\vbfn)^2
          + \hE^{-1}\Norm{\vbfn}{0,\partial\E}^2.
\]
Finally, the boundary contribution is spectrally equivalent to the sum of the boundary degrees of freedom squared;
see Remark~\ref{remark:alternative-stab:boundary}.
\medskip

Next, we prove the upper bound~\eqref{upper-boud:2D:dofi-dofi}.
Notably, we need to estimate three types of degrees of freedom.
Lemma~\ref{lemma:equivalence-polynomials} easily implies~$\Norm{\qbfboldalpha}{0,\E} \lesssim \hE$
and~$\Norm{\malphabold}{0,\E} \lesssim \hE$.
Since~$\vbfn$ has zero average over~$\partial \E$
and we consider scaled polynomial functions in the definition of the DoFs~\eqref{complementary:moments} and~\eqref{divergence:moments},
a scaled Poincar\'e inequality entails
a bound on the DoFs of type~$\textbf{Dv}_3$ and~$\textbf{Dv}_4$:
\[
\frac{1}{\vert\E\vert} \int_\E \vbfn \cdot \qbfboldalpha
\lesssim \hE^{-2} \Norm{\vbfn}{0,\E} \Norm{\qbfboldalpha}{0,\E}
\lesssim \SemiNorm{\vbfn}{1,\E}
\]
and
\[
\frac{\hE}{\vert\E\vert} \int_\E \div \vbfn \malphabold
\lesssim \hE^{-1} \Norm{\div \vbfn}{0,\E} \Norm{\malphabold}{0,\E}
\lesssim \SemiNorm{\vbfn}{1,\E}.
\]
As for the boundary DoFs contribution~$\textbf{Dv}_1$ and~$\textbf{Dv}_2$,
we resort to Remark~\ref{remark:alternative-stab:boundary},
write the sum of the boundary DoFs equivalently as~$\Norm{\vbfn}{0,\partial\E}$,
and use a trace inequality and a Poincar\'e inequality.
\end{proof}

\begin{remark} \label{remark:constructing-full-bf-2D}
Recalling the definition of the discrete bilinear form~$\ahE(\cdot,\cdot)$ in~\eqref{local-dscrete-bf},
stability bounds involving~$\ahE(\cdot,\cdot)$ instead of~$\SE(\cdot,\cdot)$ are analogous
to those shown in Theorems~\ref{theorem:stability-2D} and~\ref{theorem:stability-2D:dofi-dofi}
with~$\alpha_*$ and~$\alpha^*$ replaced by~$\min(1,\alpha_*)$ and~$\max(1,\alpha^*)$.
\eremk
\end{remark}

\subsection{Interpolation estimates} \label{subsection:interpolation-2D}
Interpolation estimates for Stokes-type virtual element functions are well-known;
see~\cite[Proposition~$4.2$]{BeiraoDaVeiga-Lovadina-Vacca:2017} for the standard 2D case
and~\cite[Theorem~$4.1$]{BeiraoDaVeiga-Lovadina-Vacca:2018} for the enhanced 2D case.
To the best of our knowledge, no explicit interpolation estimates are available for 3D Stokes-type virtual element functions.

In this section, we prove interpolation estimates undertaking a different avenue,
notably using the stability estimates in Theorem~\ref{theorem:stability-2D}.
This novel approach is interesting \emph{per se}.
In fact, it can be easily generalized to derive interpolation properties for other virtual element spaces,
once stability estimates are available.

For all~$\ubf \in H^{1+\varepsilon}(\E)$,
$\varepsilon>0$,
we define~$\ubfI \in \VbfnE$ as the only function satisfying
\begin{equation} \label{definition:interpolant-2D}
    \dof_j(\ubf-\ubfI) = 0
    \qquad
    \qquad \forall j=1, \dots, \dim(\VbfnE).
\end{equation}
We have the following interpolation estimates.

\begin{thm} \label{theorem:interpolation-2D}
Given~$\ubf \in [H^{s+1}(\E)]^2$, $0 < s \le \p$, and~$\ubfI$ its DoFs interpolant as in~\eqref{definition:interpolant-2D},
the following bound is valid:
\[
\Norm{\ubf-\ubfI}{0,\E} + \hE \SemiNorm{\ubf-\ubfI}{1,\E}
\lesssim \hE^{s+1} \SemiNorm{\ubf}{s+1,\E}.
\]
The hidden constant depends on the shape-regularity of the mesh and the degree of accuracy~$\p$.
\end{thm}
\begin{proof}
Let~$\ubfpi$ be~$\Piboldnabla \ubf$, being~$\Piboldnabla$ defined in~\eqref{H1-projectors-bulk}.
The triangle inequality gives
\[
\SemiNorm{\ubf-\ubfI}{1,\E}
\le \SemiNorm{\ubfpi-\ubfI}{1,\E} + \SemiNorm{\ubf-\ubfpi}{1,\E}.
\]
Thanks to standard polynomial approximation results, we only need to bound the first term on the right-hand side.
Observe that~$\ubfpi-\ubfI$ belongs to~$\VbfnE$.
Let~$\SE(\cdot,\cdot)$ be defined in~\eqref{explicit:stabilization-2D}.
Then, we use bound~\eqref{lower-boud:2D}
and Remark~\ref{remark:alternative-stab:boundary},
and write
\[
\begin{split}
\SemiNorm{\ubfpi-\ubfI}{1,\E}^2 
& \lesssim \SE(\ubfpi-\ubfI, \ubfpi-\ubfI)\\[4pt]
& \lesssim \hE^{-2} \Norm{\Piboldzperp (\ubfpi-\ubfI)}{0,\E}^2
    + \Norm{\div (\ubfpi-\ubfI)}{0,\E}^2
    + \sum_j \dofB_j(\ubfpi-\ubfI)^2.
\end{split}
\]
We prove that each of the three terms above can be bounded by an error term involving the difference~$\ubf \! - \! \ubfpi$.
To this aim, we preliminary observe that
\begin{equation} \label{furbizie}
\Piboldzperp (\ubf-\ubfI)=0,
\qquad 
\div \ubfI = \Pizpmo (\div \ubf),
\qquad
\dofB_j(\ubf-\ubfI)=0,
\end{equation}
where~$\Pizpmo: L^2(\E) \to \Pbb_{\p-1}(\E)$ is the $L^2$ scalar orthogonal projection onto~$\Pbb_{\p-1}(\E)$.

Thus, as for the bulk~$L^2$ term, we use~\eqref{furbizie}, the stability of orthogonal projections,
the fact that~$\ubf-\ubfpi$ has zero average over~$\E$, and the Poincar\'e inequality.
Next, we estimate the divergence contribution by using~\eqref{furbizie} and bounding the $L^2$ norm of the divergence by the $H^1$ seminorm.
Eventually, the boundary contribution is bounded using~\eqref{furbizie} again
and the Sobolev embedding~$[H^{1+\varepsilon}(\E)]^2 \hookrightarrow [L^{\infty}(\E)]^2$,
$\varepsilon>0$, as follows: for each boundary degree of freedom~$\dofB_j$,
\[
\dofB_j(\ubfpi-\ubfI)^2
= \dofB_j(\ubf-\ubfpi)^2
\le \Norm{\ubf-\ubfpi}{\infty,\E}^2
\lesssim \hE^{-2} \Norm{\ubf-\ubfpi}{0,\E}^2 
         + \hE^{2\varepsilon} \SemiNorm{\ubf-\ubfpi}{1+\varepsilon,\E}^2.
\]
Estimates in the~$H^1$ norm follow using standard polynomial approximation properties as in~\cite{Verfuerth:1999}.
Instead, estimates in the~$L^2$ norm are a simple consequence of Poincar\'e-type arguments
and the estimates in the $H^1$ seminorm.
\end{proof}

\begin{remark}
The present analysis assumes that the length of each edge is comparable to the diameter of the parent element;
see the second mesh condition ($d=2$) at the end of Section \ref{section:introduction}.
Nevertheless, our results could be generalized to the ``small edges'' case
by combining the present analysis with the ideas in ~\cite{BeiraoDaVeiga-Lovadina-Russo:2017, Brenner-Sung:2018}.
\eremk
\end{remark}

\section{The three dimensional case} \label{section:3D}

This section is devoted at proving stability and interpolation properties in three dimensions.
In Section~\ref{subsection:space-3D}, we recall the definition of the three dimensional Stokes-like virtual element space~\cite{BeiraoDaVeiga-Dassi-Vacca:2020}.
Stability properties are derived in Section~\ref{subsection:stability-3D}
for a projection based stabilization
(comments on the degrees of freedom based stabilization are discussed in
Remark~\ref{remark:dofi-dofi:3D}).
In Section~\ref{subsection:interpolation-3D},
we provide interpolation estimates based on the previously proven stability properties.

\subsection{Virtual element spaces in three dimensions} \label{subsection:space-3D}

\paragraph*{Virtual element spaces on faces.}
Given a polyhedron~$\E \in \taun$, on each of its faces~$\F$,
we define the~$H^1$ projector~$\PiboldnablaF: [H^1(\F)]^3 \to [\Pbb_{\p}(\F)]^3$ as
\begin{equation} \label{H1-projectors-face}
(\nablaF \qbfpF, \nablaF(\vbf - \PiboldnablaF \vbf))_{0,\F}=0,
\quad
\int_{\partial \F} \vbf - \PiboldnablaF \vbf = \mathbf 0
\quad \forall \vbf \in [H^1(\F)]^3,\;
\forall \qbfpF \in [\Pbb_{\p}(\F)]^3.
\end{equation}
Based on this, we define the nodal (enhanced) virtual element space
\begin{equation} \label{nodal-VE-space-faces-enhanced}
\VbfnF
:= \{ \vbfn \in [\mathcal C^0(\F)]^3 \mid 
        \DeltaF \vbfn \in [\Pbb_\p(\F)]^3,
        \; \vbfn{}_{|\e} \in [\Pbb_\p(\e)]^3 \; \forall \e \in \EcalF,
        \; \vbfn \text{ satisfies } \eqref{enhancing-constraint-face} \},
\end{equation}
where, given~$[\Pbb_{\p,\p-2}(\E)]^3$ the space of homogeneous vector polynomials
of degree larger than~$\p-2$ and smaller than or equal to~$\p$,
\begin{equation} \label{enhancing-constraint-face}
    \int_\F (\vbfn - \PiboldnablaF \vbfn) \qbfppmtF = 0 
    \quad 
    \forall \qbfppmtF \in [\Pbb_{\p,\p-2}(\E)]^3.
\end{equation}
We endow the space~$\VbfnF$ with the following set of unisolvent DoFs~\cite{Ahmad-Alsaedi-Brezzi-Marini-Russo:2013}:
\begin{itemize}
\item the vector values \textbf{Dv}$_1^\F$($\vbfn$) at the vertices of~$\F$;
\item the vector values \textbf{Dv}$_2^\F$($\vbfn$) at the $\p-1$ internal Gau\ss-Lobatto nodes on each edge~$\e$ of~$\EcalF$;
\item for~$\p\ge2$, given $\{ \qbfboldalphaF  \}$ a basis of~$[\Pbb_{\p-2}(\F)]^3$, the moments
\[
\textbf{Dv}^\F_3(\vbfn) := \frac{1}{\vert \F \vert} \int_\F \vbfn \cdot \qbfboldalphaF .
\]
\end{itemize}
Such DoFs allow for the computation of the projector~$\PiboldnablaF$ in~\eqref{H1-projectors-face}.
Thus, as discussed in~\cite{BeiraoDaVeiga-Brezzi-Cangiani-Manzini-Marini-Russo:2013},
the enhancing constraint~\eqref{enhancing-constraint-face}
allows for the computation of the orthogonal projector~$\PiboldzpF: [L^2(\F)]^3 \to [\Pbb_{\p}(\F)]^3$ defined as
\[
\begin{split}
(\qbfpF, \vbfn - \PiboldzpF \vbfn)_{0,\F}=0
\qquad \forall \vbfn \in \VbfnF,
\quad \forall \qbfpF \in [\Pbb_{\p}(\F)]^3.
\end{split}
\]

\paragraph*{Virtual element spaces on polyhedra.}
Given a polyhedron~$\E$, we define the space
\[
\VbfnE := \{ \vbfn \in [H^1(\E)]^3 \mid \vbfn \text{ satisfies } \eqref{local-auxiliary-problem-3D} \},
\]
where, for some~$s\in L^2_0(\E)$,
\begin{equation} \label{local-auxiliary-problem-3D}
\begin{cases}
-\Deltabold \vbfn - \nabla s = \xbf \wedge \qbfpmth & \qbfpmth \in [\Pbb_{\p-3}(\E)]^3 \\[4pt]
\div \vbfn = \qpmo                                  & \qpmo \in \Pbb_{\p-1}(\E)\\[4pt]
\vbfn{}_{|\partial \E} \in [\Ccal^0(\partial \E)]^3, \vbfn{}_{|\F} \in \VbfnF & \forall \F \in \EE,
\end{cases}
\end{equation}
with all the equations to be understood in a weak sense.

We endow the space~$\VbfnE$ with the following set of unisolvent DoFs~\cite{BeiraoDaVeiga-Dassi-Vacca:2020}:
\begin{itemize}
\item the vector values \textbf{Dv}$_1$($\vbfn$) at the vertices of~$\E$;
\item the vector values \textbf{Dv}$_2$($\vbfn$) at the $\p-1$ internal Gau\ss-Lobatto nodes on each edge~$\e$ of~$\E$;
\item for all faces~$\F \in \EcalF$, given $\{ \qbfboldalphaF  \}$ a basis of~$[\Pbb_{\p}(\F)]^3$,
the moments
\begin{equation} \label{complementary:moments-face-bis}
\textbf{Dv}^\F_3(\vbfn) := \frac{1}{\vert \F \vert} \int_\F \vbfn \cdot \qbfboldalphaF .
\end{equation}
\item for~$\p\ge3$, given $\{ \qbfboldalpha  \}$ a basis of~$\xbf \wedge [\Pbb_{\p-3}(\E)]^3$,
the bulk ``orthogonal'' moments
\begin{equation} \label{complementary:moments-3D}
\textbf{Dv}_4(\vbfn) := \frac{1}{\vert \E \vert} \int_\E \vbfn \cdot \qbfboldalpha;
\end{equation}
\item given $\{ \malphabold \}$ a basis of~$\Pbb_{\p-1}(\E) \setminus \Rbb$, the bulk ``divergence'' moments
\begin{equation} \label{divergence:moments-3D}
\textbf{Dv}_5(\vbfn) := \frac{\hE}{\vert \E \vert} \int_\E \div \vbfn \ \malphabold.
\end{equation}
\end{itemize}
As for the two dimensional case,
we require that the bases~$\{ \qbfboldalphaF \}$,
$\{ \qbfboldalpha \}$,
and~$\{ \malphabold \}$ are invariant with respect to translations and dilations;
see~\cite{Ahmad-Alsaedi-Brezzi-Marini-Russo:2013, Dassi-Vacca:2020}.

The unisolvence of the above DoFs is proved, e.g., as in~\cite{BeiraoDaVeiga-Dassi-Vacca:2020}.
Thanks to the enhancement in the definition of the virtual element spaces on faces,
we can compute the three dimensional version of the projectors in~\eqref{L2-projectors} and~\eqref{H1-projectors-bulk}.

As in the two dimensional case, the global counterpart of the space~$\VbfnE$ is constructed
by a standard $H^1$-conforming DoFs-coupling.

\subsection{Stability estimates} \label{subsection:stability-3D}
The definition of the DoFs in Section~\ref{subsection:space-3D} allows us to compute
the orthogonal projector~$\Piboldzwedge: [L^2(\E)]^3 \to \xbf \wedge [\Pbb_{\p-3}(\E)]^3$ defined as
\[
\begin{split}
(\qbfboldalpha, \vbfn - \Piboldzwedge \vbfn)_{0,\E}=0
\qquad \forall \vbfn \in \VbfnE,
\quad \forall \qbfboldalpha \in \xbf\wedge[\Pbb_{\p-3}(\E)]^3.
\end{split}
\]
We consider the local stabilization
\begin{equation} \label{explicit:stabilization-3D}
\begin{split}
\SE(\ubfn,\vbfn)
& :=   \hE^{-2} (\Piboldzwedge \ubfn, \Piboldzwedge \vbfn)_{0,\E}
        + (\div \ubfn, \div \vbfn)_{0,\E} \\[4pt]
& \quad + \sum_{\F\in\EE} \left[ \hF^{-1}  (\PiboldzpF \ubfn, \PiboldzpF \vbfn)_{0,\F}
        + (\ubfn,\vbfn)_{0,\partial \F} \right].
\end{split}
\end{equation}
We prove the following stability estimates.

\begin{thm} \label{theorem:stability-3D}
The following stability bounds are valid:
there exists~$0 < \alpha_* < \alpha^*$ independent of~$\hE$ such that, for all~$\vbfn$ in~$\VbfnE$
such that~$\int_{\partial \E} \vbfn= \mathbf 0$,
\begin{align}
\label{lower-boud:3D}
& \alpha_* \SemiNorm{\vbfn}{1,\E}^2 \le \SE(\vbfn,\vbfn) , \\
\label{upper-bound:3D}
& \SE(\vbfn,\vbfn) \le 
\alpha^* \SemiNorm{\vbfn}{1,\E}^2 .
\end{align}
\end{thm}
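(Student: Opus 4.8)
The plan is to transcribe the proof of Theorem~\ref{theorem:stability-2D}, the only genuine novelty being that $\partial\E$ is now the union of the faces $\F\in\EcalF$, each carrying the two dimensional space $\VbfnF$; wherever the two dimensional argument invokes a polynomial inverse inequality on the (one dimensional) skeleton, we shall instead decompose face by face and appeal to the two dimensional stability theory. Throughout, $\vbfn\in\VbfnE$ solves~\eqref{local-auxiliary-problem-3D} for some auxiliary pressure $s\in L^2_0(\E)$ and some data $\xbf\wedge\qbfpmth$, $\qpmo$. For the lower bound~\eqref{lower-boud:3D} I would first reproduce the three preliminary facts of the two dimensional proof in their three dimensional form: (1) $\SemiNorm{\Deltabold\vbfn}{-1,\E}\lesssim\SemiNorm{\vbfn}{1,\E}$ by a verbatim integration by parts; (2) $\Norm{\xbf\wedge\qbfpmth}{0,\E}\lesssim\hE^{-1}\SemiNorm{\vbfn}{1,\E}$, obtained from the polynomial Helmholtz (Koszul) splitting $[\Pbb_{\p-2}(\E)]^3=\nabla\Pbb_{\p-1}(\E)\oplus\xbf\wedge[\Pbb_{\p-3}(\E)]^3$, which shows that $\curlbold$ is injective on $\xbf\wedge[\Pbb_{\p-3}(\E)]^3$, hence by scaling $\Norm{\xbf\wedge\qbfpmth}{0,\E}\lesssim\hE\Norm{\curlbold(\xbf\wedge\qbfpmth)}{0,\E}=\hE\Norm{\curlbold\Deltabold\vbfn}{0,\E}$ (first equation of~\eqref{local-auxiliary-problem-3D}); this last quantity is controlled by $\hE^{-1}\SemiNorm{\vbfn}{1,\E}$ by the very same bubble argument as in 2D, with the piecewise cubic bubble $\bE$ on $\tautildenE$, a polynomial inverse estimate, two integrations by parts on each $\Ttilde\in\tautildenE$ (using that $(\bE)^2$ and its normal derivative vanish on $\partial\Ttilde$), and polynomial inverse inequalities, $\curlbold$ playing the role of $\rot$; (3) $\Norm{s}{0,\E}\lesssim\SemiNorm{\vbfn}{1,\E}$, from the inf-sup condition for $[H^1_0(\E)]^3\times L^2_0(\E)$, integration by parts, the triangle inequality applied to $\nabla s=-\Deltabold\vbfn-\xbf\wedge\qbfpmth$, the scaled Poincar\'e estimate $\Norm{\cdot}{-1,\E}\lesssim\hE\Norm{\cdot}{0,\E}$, and (1)--(2).

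The core identity of the lower bound is obtained exactly as in 2D: integrating by parts on $\E$, inserting the first equation of~\eqref{local-auxiliary-problem-3D}, and integrating by parts again, one gets, with $\sigmabold:=\nablabold\vbfn+\Ibf s$ (so that $\div\sigmabold=-\xbf\wedge\qbfpmth$),
\[
\SemiNorm{\vbfn}{1,\E}^2=(\vbfn,\sigmabold\nbfE)_{0,\partial\E}+(\Piboldzwedge\vbfn,\xbf\wedge\qbfpmth)_{0,\E}-(\div\vbfn,s)_{0,\E}.
\]
The two bulk terms are bounded by $(\hE^{-1}\Norm{\Piboldzwedge\vbfn}{0,\E}+\Norm{\div\vbfn}{0,\E})\SemiNorm{\vbfn}{1,\E}$ using (2) and (3). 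For the boundary term I would write $(\vbfn,\sigmabold\nbfE)_{0,\partial\E}\le\Norm{\vbfn}{\frac12,\partial\E}\Norm{\sigmabold\nbfE}{-\frac12,\partial\E}$ and bound the second factor, via the divergence trace inequality and (2)--(3), by $\Norm{\sigmabold}{0,\E}+\hE\Norm{\div\sigmabold}{0,\E}\lesssim\SemiNorm{\vbfn}{1,\E}$. The first factor is where the argument departs from 2D: since $\vbfn$ is now a piecewise virtual element function (not a piecewise polynomial) on $\partial\E$, I would decompose the scaled $H^{1/2}(\partial\E)$ norm face by face and, on each $\F$, split $\vbfn|_\F=\PiboldzpF\vbfn+w$ with $w:=\vbfn|_\F-\PiboldzpF\vbfn$. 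The polynomial part is controlled by $\hF^{-1}\Norm{\PiboldzpF\vbfn}{0,\F}^2$ via a polynomial inverse inequality on $\F$, while $w\in\VbfnF$ satisfies $\PiboldzpF w=0$, so that in particular $w$ has zero mean on $\F$ and is $L^2(\F)$-orthogonal to $[\Pbb_\p(\F)]^3$; the two dimensional stability/Poincar\'e theory for the enhanced nodal space on $\F$ (whose projection-type stabilizer reduces, when $\PiboldzpF w=0$, to $\hF^{-1}\Norm{w}{0,\partial\F}^2$) then gives $\hF^{-1}\Norm{w}{0,\F}^2+\SemiNorm{w}{\frac12,\F}^2\lesssim\hF^{-1}\Norm{w}{0,\partial\F}^2\lesssim\Norm{\vbfn}{0,\partial\F}^2+\hF^{-1}\Norm{\PiboldzpF\vbfn}{0,\F}^2$. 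Summing over faces and continuity of $\vbfn$ across them yield $\Norm{\vbfn}{\frac12,\partial\E}^2\lesssim\sum_{\F\in\EcalF}(\hF^{-1}\Norm{\PiboldzpF\vbfn}{0,\F}^2+\Norm{\vbfn}{0,\partial\F}^2)$; inserting this, dividing by $\SemiNorm{\vbfn}{1,\E}$ and squaring reproduces exactly $\SE(\vbfn,\vbfn)$, i.e.\ \eqref{lower-boud:3D}.

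For the upper bound~\eqref{upper-bound:3D} I would estimate the four groups of terms in~\eqref{explicit:stabilization-3D} one at a time. The term $\hE^{-2}\Norm{\Piboldzwedge\vbfn}{0,\E}^2\le\hE^{-2}\Norm{\vbfn}{0,\E}^2\lesssim\SemiNorm{\vbfn}{1,\E}^2$ follows from $L^2$-projection stability and the scaled Poincar\'e inequality, which is precisely where the hypothesis $\int_{\partial\E}\vbfn=\mathbf 0$ enters; $\Norm{\div\vbfn}{0,\E}^2\le\SemiNorm{\vbfn}{1,\E}^2$ is immediate; $\hF^{-1}\Norm{\PiboldzpF\vbfn}{0,\F}^2\le\hF^{-1}\Norm{\vbfn}{0,\F}^2\lesssim\hE^{-2}\Norm{\vbfn}{0,\E}^2+\SemiNorm{\vbfn}{1,\E}^2\lesssim\SemiNorm{\vbfn}{1,\E}^2$ by a trace inequality onto $\F$ and Poincar\'e, the number of faces being uniformly bounded. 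The delicate term is $(\vbfn,\vbfn)_{0,\partial\F}$: a trace inequality from $H^1(\F)$ onto $\partial\F$ reduces it to $\hF^{-1}\Norm{\vbfn}{0,\F}^2+\hF\SemiNorm{\vbfn}{1,\F}^2$, and the face seminorm $\SemiNorm{\vbfn}{1,\F}$ is controlled by using that $\vbfn|_\F\in\VbfnF$ together with the two dimensional stability of that space (via the splitting $\vbfn|_\F=\PiboldnablaF\vbfn+(\vbfn|_\F-\PiboldnablaF\vbfn)$ and the estimate $\SemiNorm{\PiboldnablaF\vbfn}{1,\F}\lesssim\hF^{-1/2}\Norm{\vbfn}{0,\partial\F}$ obtained by Galerkin orthogonality and a polynomial trace inverse inequality). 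This is exactly why, in contrast with the two dimensional statement, the upper bound in 3D is restricted to $\vbfn\in\VbfnE$ rather than to arbitrary $[H^1(\E)]^3$ fields with zero boundary average.

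The main obstacle is, in both bounds, the treatment of the skeleton contributions: the $H^{1/2}(\partial\E)$ estimate for the trace of $\vbfn$ in the lower bound and the edge term $(\vbfn,\vbfn)_{0,\partial\F}$ in the upper bound. In two dimensions these were one-line consequences of a polynomial inverse inequality on the one dimensional boundary, whereas here the boundary trace of $\vbfn$ is a genuine virtual element function on each face and the estimate must be routed through the (known) two dimensional stability theory for $\VbfnF$, obtained either from the stability estimates for nodal virtual element spaces or by the techniques of Theorem~\ref{theorem:stability-2D}. Everything else is a transcription of the proof of Theorem~\ref{theorem:stability-2D}, with $\xbf\wedge\qbfpmth$ and $\curlbold$ playing the roles of $\xperp\qpmth$ and $\rot$.
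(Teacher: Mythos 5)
Your proposal is correct and follows essentially the same route as the paper: the three preliminary bounds and the core identity are transcribed from the 2D proof with $\xbf\wedge\qbfpmth$ and $\curlbold$ replacing $\xperp\qpmth$ and $\rot$, the skeleton terms are reduced to $\hF^{-1}\Norm{\PiboldzpF\vbfn}{0,\F}^2+\Norm{\vbfn}{0,\partial\F}^2$ face by face, and the upper bound is handled by trace, Poincar\'e, and face-wise inverse estimates for nodal virtual element functions. The only (cosmetic) difference is that you re-derive the face-wise inverse estimates $\Norm{\nablaF\vbfn}{0,\F}^2\lesssim\hF^{-2}\Norm{\PiboldzpF\vbfn}{0,\F}^2+\hF^{-1}\Norm{\vbfn}{0,\partial\F}^2$ and $\SemiNorm{\vbfn}{1,\F}\lesssim\hF^{-1}\Norm{\vbfn}{0,\F}$ from the 2D stability theory, whereas the paper simply quotes them from the literature on nodal virtual element spaces.
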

\begin{proof}
We begin by proving the lower bound~\eqref{lower-boud:3D}.
As in the proof of Theorem~\ref{theorem:stability-2D}, we can prove the following bounds:
for all~$\vbfn \in \VbfnE$, with ``auxiliary pressure''~$s$ and right-hand side~$\xbf\wedge\qbfpmth$ in~\eqref{local-auxiliary-problem-3D},
\[
\SemiNorm{\Deltabold \vbfn}{-1,\E} \lesssim  \SemiNorm{\vbfn}{1,\E},\qquad
\Norm{\xbf \wedge \qbfpmth}{0,\E} \lesssim  \hE^{-1} \SemiNorm{\vbfn}{1,\E},\qquad 
\Norm{s}{0,\E} \lesssim  \SemiNorm{\vbfn}{1,\E}.
\]
With this at hand, as in the 2D case, we deduce
\[
\SemiNorm{\vbfn}{1,\E} 
\lesssim \hE^{-1} \Norm{\Piboldzwedge \vbfn}{0,\E} + \Norm{\div \vbfn}{0,\E} + \Norm{\vbfn}{\frac12,\partial \E} .
\]
Thus, we only have to estimate the last term on the right-hand side.
After recalling that~$\vbfn$ has zero vector average over~$\partial\E$,
we observe the bound
\cite[eq. (2.16)]{Brenner-Sung:2018}
\[
\Norm{\vbfn}{\frac12,\partial \E}^2
\lesssim \hE \sum_{\F \in \EE} \Norm{\nablaF \vbfn}{0,\F}^2.
\]
We can estimate each face contribution by means of standard nodal virtual element inverse estimates;
see, e.g., \cite[Theorem~$2$]{BeiraoDaVeiga-Chernov-Mascotto-Russo:2018} and~\cite[Section~$3$]{Chen-Huang:2018}:
\[
\Norm{\nablaF \vbfn}{0,\F}^2
\lesssim \hF^{-2} \Norm{\PiboldzpF \vbfn}{0,\F}^2 
            + \hF^{-1} \Norm{\vbfn}{0,\partial \F}^2.
\]
The assertion follows summing over all the faces and collecting the above estimates.
\medskip

Next, we focus on the upper bound~\eqref{upper-bound:3D}.
Notably, we estimate from above the four terms on the right-hand side of~\eqref{explicit:stabilization-3D}.
Since~$\vbfn$ has zero average over~$\partial\E$, we readily have
\[
\hE^{-2} \Norm{\Piboldzwedge \vbfn}{0,\E}^2
+ \Norm{\div \vbfn}{0,\E}^2
\lesssim \SemiNorm{\vbfn}{1,\E}^2.
\]
The trace and the Poincar\'e inequalities also yield
\[
\sum_{\F\in\EE} \hF^{-1}  \Norm{\PiboldzpF \vbfn}{0,\F}^2
\lesssim \SemiNorm{\vbfn}{1,\E}^2.
\]
We are left with estimating the fourth term on the right-hand side of~\eqref{explicit:stabilization-3D}.
Using a trace inequality on each face~$\F$ yields
\[
\Norm{\vbfn}{0,\partial\F} 
\lesssim \hF^{-\frac12} \Norm{\vbfn}{0,\F}
+ \hF^{\frac12} \SemiNorm{\vbfn}{1,\F}.
\]
Inverse estimates for nodal virtual element functions,
see, e.g., \cite[Theorem~$3.6$]{Chen-Huang:2018}, entail
\[
\Norm{\vbfn}{0,\partial\F} 
\lesssim \hF^{-\frac12} \Norm{\vbfn}{0,\F}.
\]
We can apply this inverse estimate
as the restriction of~$\vbfn$ on each face~$\F$
belongs to a two dimensional nodal virtual element space.

Taking the square on both sides and summing over the faces,
then using another trace inequality,
and eventually a Poincar\'e inequality on~$\E$
give the assertion.
\end{proof}

\begin{remark} \label{remark:dofi-dofi:3D}
Also the 3D ``dofi-dofi'' stabilization
\[
\SED(\ubfn,\vbfn)
:= \sum_{j=1}^{\dim(\VbfnE)} \dof_j(\ubfn) \dof_j(\vbfn)
\]
satisfies the bounds~\eqref{lower-boud:3D} and~\eqref{upper-bound:3D}.
To see this it suffices to take the steps from Theorem~\ref{theorem:stability-3D}
and argue similarly as in the proof of Theorem~\ref{theorem:stability-2D:dofi-dofi}.
Moreover, considerations analogous to those in Remark~\ref{remark:constructing-full-bf-2D} are valid for the 3D case.
\eremk
\end{remark}

\subsection{Interpolation estimates} \label{subsection:interpolation-3D}
In this section, we prove interpolation estimates for 3D Stokes-like virtual element spaces.

For all~$\ubf \in H^{\frac32+\varepsilon}(\E)$,
$\varepsilon>0$,
we define~$\ubfI$ as the only function in~$\VbfnE$ satisfying
\begin{equation} \label{definition:interpolant-3D}
    \dof_j(\ubf-\ubfI) = 0
    \qquad
    \qquad \forall j=1, \dots, \dim(\VbfnE).
\end{equation}
We have the following interpolation estimates.

\begin{thm} \label{theorem:interpolation-3D}
Given~$\ubf \in [H^{s+1}(\E)]^3$, $1/2 < s \le \p$,
and~$\ubfI$ its DoFs interpolant as in~\eqref{definition:interpolant-3D},
the following bound is valid:
\[
\Norm{\ubf-\ubfI}{0,\E} + \hE  \SemiNorm{\ubf-\ubfI}{1,\E} 
\lesssim \hE^{s+1} \SemiNorm{\ubf}{s+1,\E}.
\]
The hidden constant depends on the shape-regularity of the mesh and the degree of accuracy~$\p$.
\end{thm}
\begin{proof}
Let~$\ubfpi$ the best vector polynomial approximation of~$\ubf$ in~$[H^1(\E)]^3$; see, e.g., \cite{Verfuerth:1999}.
As in the proof of Theorem~\ref{theorem:interpolation-2D}, we only need to estimate the energy of~$\ubfpi-\ubfI$.

In addition to the three dimensional counterpart of~\eqref{furbizie}, we also have
\begin{equation} \label{furbizie-extra}
\PiboldzpF (\ubf-\ubfI)=0 \qquad \forall \F \in \EE.
\end{equation}
Let~$\SE(\cdot,\cdot)$ be defined in~\eqref{explicit:stabilization-3D}.
Using~\eqref{lower-boud:3D}, we write
\[
\begin{split}
\SemiNorm{\ubfpi-\ubfI}{1,\E}^2
& \lesssim \SE(\ubfpi-\ubfI, \ubfpi-\ubfI)\\
& =     \hE^{-2} \Norm{\Piboldzwedge (\ubfpi-\ubfI)}{0,\E}^2
        + \Norm{\div (\ubfpi-\ubfI)}{0,\E}^2\\
&\quad  + \sum_{\F\in\EE} \left[ \hF^{-1}  \Norm{\PiboldzpF (\ubfpi-\ubfI)}{0,\F}^2 \right]
        + \sum_j \dofB_j(\ubfpi-\ubfI)^2.
\end{split}
\]
We have to bound the four terms on the right-hand side by~$\SemiNorm{\ubfpi-\ubfI}{1,\E}$.
The first, second, and fourth are dealt with by using~\eqref{furbizie}, as in the proof of Theorem~\ref{theorem:interpolation-2D}.
As for the ``new'' third term, it suffices to resort to~\eqref{furbizie-extra},
the continuity of~$\PiboldzpF$,
the trace inequality,
and standard polynomial approximation properties.

Estimates in the~$L^2$ norm are a consequence of Poincar\'e type estimates on~$\ubf-\ubfI$
and the energy estimates.
\end{proof}

\begin{remark} \label{remark:stability-enhancement}
So far, we derived stability and interpolation properties for standard Stokes-like spaces.
Following, e.g., \cite{BeiraoDaVeiga-Lovadina-Vacca:2018},
we may also consider the enhanced version of such spaces,
in the same spirit as we defined the nodal virtual elements spaces in~\eqref{nodal-VE-space-faces-enhanced}.
It is apparent that no essential modifications take place in the proof of the stability estimates.
The only difference between our setting and the enhanced one
is that in the latter we should employ polynomial inverse estimates for slightly larger polynomial degrees.
\eremk
\end{remark}

\section{Numerical validation of the stability estimates in 2D} \label{section:numerical-stabilization}
In the foregoing sections, we proved stability bounds for fixed degree of accuracy and regular polytopal meshes.
In this section, we numerically investigate the behaviour of the stability constants
from the practical side in two different scenarios:
\begin{itemize}
    \item while keeping a mesh fixed, increase the degree of accuracy~$\p$;
    \item while keeping the degree of accuracy~$\p$ fixed, consider sequences of elements with degenerating geometry.
\end{itemize}
We focus on the two dimensional case,
despite the arguments we discuss below can be generalized to three dimensions.

Since we are interested in approximating the stability constants discussed in Remark~\ref{remark:constructing-full-bf-2D},
we only need to investigate the behaviour of the minimum and maximum generalized eigenvalues of problem
\begin{equation} \label{generalized:eigenvalue-problem}
\Abf \vbf = \lambda \Bbf \vbf,
\end{equation}
where the square symmetric matrices~$\Abf$ and~$\Bbf$ are defined as follows:
for a given element~$\E\in\taun$ and the canonical basis~$\{\phibold_j\}$ of~$\VbfnE$, $j=1,\dots,\Ndof$,
$\Ndof$ being the dimension of~$\VbfnE$,
associated with the degrees of freedom
\textbf{Dv}$_1$($\cdot$),
\textbf{Dv}$_2$($\cdot$),
\textbf{Dv}$_3$($\cdot$),
and~\textbf{Dv}$_4$($\cdot$),
\[
\Abf_{i,j} = \ahE(\phibold_j, \phibold_i),
\qquad
\Bbf_{i,j} = (\nablabold \phibold_j, \nablabold \phibold_i)_{0,\E}
\qquad \forall i, j=1,\dots,\Ndof.
\]
The bilinear form~$\ahE(\cdot,\cdot)$ is computable via the degrees of freedom
following definition~\eqref{local-dscrete-bf}
and standard VEM arguments in order to compute~$\Piboldnabla$.
Therefore, we only have to compute the entries of~$\Bbf$.
This is not immediate:
the canonical basis functions are not available in closed form
since are solutions to local Stokes problems with an unknown datum for the first equation.
Thus, we need to detail how to approximate them.

We split the canonical basis~$\{ \phibold \}$ into three sets
\begin{equation} \label{splitting-canonical}
\{ \phiboldB \},\qquad\qquad
\{ \phiboldperp \},\qquad\qquad
\{ \phibolddiv \},
\end{equation}
which denote the basis functions associated with boundary degrees of freedom,
the ``orthogonal'' moments~\eqref{complementary:moments},
and the divergence moments~\eqref{divergence:moments}, respectively.

The elements of this basis are defined implicitly via the degrees of freedom,
so we cannot directly approximate them by means of any Galerkin methods
(unless resorting to some mixed formulation approach).
For this reason, we introduce a different basis~$\{\psibold\}$
of the space~$\VbfnE$ and split it into sets as those in~\eqref{splitting-canonical}:
\[
\{ \psiboldB \},\qquad\qquad
\{ \psiboldperp \},\qquad\qquad
\{ \psibolddiv \}.
\]
The elements of this preliminary basis are constructed so that they solve Stokes problems with given polynomial data,
and can therefore be approximated at any precision by means, e.g., of a finite element method on a subtriangulation of the element.

We define the three type of basis functions as follows:
given~$\{\malphabold\}$ a basis of~$\Pbb_{\p-3}(\E)$ and~$\{\mgammabold\}$ a basis of~$\Pbb_{\p-1}(\E)\setminus \Rbb$
such that each~$\mgammabold$ has zero average over~$\E$,
\begin{alignat*}{2}
&\begin{cases}
-\Deltabold \psiboldB_i - \nabla s =\mathbf 0                           & \text{in } \E\\
\div\psiboldB_i = \vert\E\vert^{-1} \int_{\partial \E} \phiboldB_i \cdot \nbfE    & \text{in } \E\\
\psiboldB_i = \phiboldB_i                                           & \text{on } \partial\E
\end{cases}
&& \qquad \forall i=1,\dots,2\p\cdot(\# \text{edges of~$\E$}),\\
&\begin{cases}
-\Deltabold \psiboldperp_{\boldalpha} - \nabla s =\xperp\malphabold & \text{in } \E\\
\div\psiboldperp_{\boldalpha} = 0                               & \text{in } \E\\
\psiboldperp_{\boldalpha} = \mathbf0                            & \text{on } \partial\E
\end{cases}
&& \qquad \forall \vert \boldalpha \vert =0,\dots,\p-3,  \\
&\begin{cases}
-\Deltabold \psibolddiv_{\boldgamma} - \nabla s = \mathbf 0 & \text{in } \E\\
\div\psibolddiv_{\boldgamma} = \mgammabold              & \text{in } \E\\
\psibolddiv_{\boldgamma} = \mathbf0                     & \text{on } \partial\E
\end{cases}
&& \qquad \forall \vert\boldgamma\vert=1,\dots,\p-1.
\end{alignat*}
The span of the three above set of functions is the space~$\VbfnE$.
To see this, it suffices to observe that the number of functions is equal to the dimension of~$\VbfnE$
and that they are independent of each other.
Therefore, we can write each basis function~$\phibold$ as a linear combination of the~$\psibold$ functions.
Furthermore, the~$\psibold$ functions can be approximated at any precision by employing finite elements on sufficiently fine triangulations of~$\E$.
In other words, if we have at hand any finite element approximation of the~$\psibold$ basis functions,
then we only have to write the~$\phibold$ functions in terms of the~$\psibold$ functions,
and then compute the resulting matrix~$\Bbf$.
This is what we detail in Sections~\ref{subsection:computation-boundary},
\ref{subsection:computation-orthogonal},
and~\ref{subsection:computation-div}.
The algorithm reads as follows:
\begin{enumerate}
\item approximate the $\psibold$ basis functions using a FEM triangulation on~$\E$;
\item find the $\phibold$ basis functions as a linear combination of the~$\psibold$ basis functions,
described in Sections~\ref{subsection:computation-boundary}, \ref{subsection:computation-orthogonal},
and~\ref{subsection:computation-div} below;
\item compute the matrices~$\Abf$ and~$\Bbf$;
\item solve the generalized eigenvalue problem~\eqref{generalized:eigenvalue-problem}.
\end{enumerate}
In Section~\ref{subsection:nr-p-version},
we check the behaviour of the minimum and maximum generalized eigenvalues of~\eqref{generalized:eigenvalue-problem}
on a fixed element and increasing the degree of accuracy~$\p$ of the scheme;
in Section~\ref{subsection:nr-badly-shaped},
we keep fixed the degree of accuracy, focus on two different types of elements,
deform them in different ways, and check the behaviour of the corresponding stability constants.

\subsection{Expanding the boundary-type functions} \label{subsection:computation-boundary}
For all~$i=1,\dots,2\p\cdot(\# \text{edges of~$\E$})$,
we write
\[
\phiboldB_i
= \sum_{j} \Ai_j \psiboldB_j
  + \sum_{\boldbeta} \Bi_{\boldbeta} \psiboldperp_{\boldbeta}
  + \sum_{\bolddelta} \Ci_{\bolddelta} \psibolddiv_{\bolddelta}.
\]
We have to determine the A, B, and C-type coefficients by imposing the DoFs definition.

First, we use the boundary DoFs.
For any vertex or Gau\ss-Lobatto node~$N_\E$ on any edge~$\e$ of~$\E$, we have
\[
\deltacal_{i,k}=\phiboldB_i (N_k)
= \sum_{j} \Ai_j \psiboldB_j (N_k) = \Ai_k
\qquad \forall k=1,\dots, 2\p\cdot(\# \text{edges of~$\E$}).
\]
Therefore, we get the simplified expression
\[
\phiboldB_i
= \psiboldB_i
 + \sum_{\boldbeta} \Bi_{\boldbeta} \psiboldperp_{\boldbeta}
 + \sum_{\bolddelta} \Ci_{\bolddelta} \psibolddiv_{\bolddelta}.
\]
To find the B and C-type coefficients, we impose the divergence and ``orthogonal'' DoFs definition.
First, we impose the ``orthogonal'' moments definition and write
\[
    0
    = \int_\E \phiboldB_i \cdot (\xperp \mbetaboldtilde)
    = \int_\E \psiboldB_i \cdot (\xperp \mbetaboldtilde)
      + \sum_{\boldbeta} \Bi_{\boldbeta} \int_\E \psiboldperp_{\boldbeta} \cdot (\xperp \mbetaboldtilde)
      + \sum_{\bolddelta} \Ci_{\bolddelta} \int_\E \psibolddiv_{\bolddelta} \cdot (\xperp \mbetaboldtilde),
\]
whence we deduce the conditions, for all~$\vert \boldbetatilde \vert =0,\dots,\p-3$,
\[
\sum_{\boldbeta} \left( \int_\E \psiboldperp_{\boldbeta} \cdot (\xperp \mbetaboldtilde) \right) \Bi_{\boldbeta} 
+ \sum_{\bolddelta} \left(\int_\E \psibolddiv_{\bolddelta} \cdot (\xperp \mbetaboldtilde) \right) \Ci_{\bolddelta} = -  \int_\E \psiboldB_i \cdot (\xperp \mbetaboldtilde) .
\]
Next, we impose the divergence moments definition,
recall that the test polynomial~$\mdeltaboldtilde$ has zero average over~$\E$, and get
\[
\begin{split}
0
& = \int_\E \div \phiboldB_i \mdeltaboldtilde
= \int_\E \div \psiboldB_i \mdeltaboldtilde +  \sum_{\bolddelta} \Ci_{\bolddelta} \int_\E \div \psibolddiv_{\bolddelta} \mdeltaboldtilde \\
& = \vert\E\vert^{-1} \int_{\partial \E} \psiboldB_i \cdot \nbfE \int_\E \mdeltaboldtilde
    +\sum_{\bolddelta} \Ci_{\bolddelta} \int_\E \mdeltabold \ \mdeltaboldtilde
= \sum_{\bolddelta} \Ci_{\bolddelta} \int_\E \mdeltabold \ \mdeltaboldtilde,
\end{split}
\]
whence we deduce the conditions, for all~$\vert \bolddeltatilde \vert =1,\dots,\p-1$,
\[
\sum_{\bolddelta} \left( \int_\E \mdeltabold \ \mdeltaboldtilde \right) \Ci_{\bolddelta} = 0.
\]
Using the coercivity of any polynomial mass matrix,
we deduce~$\Ci_{\bolddelta}=0$ for all~$i$ and~$\bolddelta$.

Thus, the B-type coefficients are obtained by solving the linear system
\[
\sum_{\boldbeta} \left( \int_\E \psiboldperp_{\boldbeta} \cdot (\xperp \mbetaboldtilde) \right) \Bi_{\boldbeta} 
= -  \int_\E \psiboldB_i \cdot (\xperp \mbetaboldtilde) .
\]

\subsection{Expanding the orthogonal-type functions} \label{subsection:computation-orthogonal}
For all~$\vert \boldalpha \vert=0,\dots,\p-3$, we write
\[
\phiboldperp_{\boldalpha}
=\sum_{j} \Aalpha_j \psiboldB_j
 + \sum_{\boldbeta} \Balpha_{\boldbeta} \psiboldperp_{\boldbeta}
 + \sum_{\bolddelta} \Calpha_{\bolddelta} \psibolddiv_{\bolddelta}.
\]
We have to determine the A, B, and C-type coefficients by imposing the DoFs definition.
Imposing the boundary DoFs definition, we readily obtain that~$\Aalpha_j=0$
for all~$j=1,\dots, 2\p \cdot(\# \text{edges of~$\E$})$.

Thus, we focus on the other coefficients.
First, we impose the ``orthogonal'' moments definition and write
\[
    \deltacal_{\boldalpha,\boldbetatilde} \vert\E\vert
    = \int_\E \phiboldperp_{\boldalpha} \cdot (\xperp \mbetaboldtilde)
    = \sum_{\boldbeta} \Balpha_{\boldbeta} \int_\E \psiboldperp_{\boldbeta} \cdot (\xperp \mbetaboldtilde)
     + \sum_{\bolddelta} \Calpha_{\bolddelta} \int_\E \psibolddiv_{\bolddelta} \cdot (\xperp \mbetaboldtilde),
\]
whence we deduce the conditions, for all~$\vert \boldbetatilde \vert =0,\dots,\p-3$,
\[
\sum_{\boldbeta} \left( \int_\E \psiboldperp_{\boldbeta} \cdot (\xperp \mbetaboldtilde) \right) \Balpha_{\boldbeta} 
+ \sum_{\bolddelta} \left(\int_\E \psibolddiv_{\bolddelta} \cdot (\xperp \mbetaboldtilde) \right) \Calpha_{\bolddelta} = \deltacal_{\boldalpha,\boldbetatilde} \vert\E\vert.
\]
Next, we impose the divergence moments definition and get
\[
0
=\int_\E \div \phiboldperp_{\boldalpha} \mdeltaboldtilde
= \sum_{\bolddelta} \Calpha_{\bolddelta} \int_\E \div \psibolddiv_{\bolddelta} \mdeltaboldtilde
= \sum_{\bolddelta} \Calpha_{\bolddelta} \int_\E \mdeltabold \ \mdeltaboldtilde,
\]
whence we deduce the conditions, for all~$\vert \bolddeltatilde \vert =1,\dots,\p-1$,
\[
\sum_{\bolddelta} \left( \int_\E \mdeltabold \ \mdeltaboldtilde \right) \Calpha_{\bolddelta} = 0.
\]
Using the coercivity of any polynomial mass matrix,
we deduce~$\Calpha_{\bolddelta}$ for all~$\boldalpha$ and~$\bolddelta$.

Thus, the B-type coefficients are obtained by solving the linear system
\[
\sum_{\boldbeta} \left( \int_\E \psiboldperp_{\boldbeta} \cdot (\xperp \mbetaboldtilde) \right) \Balpha_{\boldbeta} 
= \deltacal_{\boldalpha,\boldbetatilde} \vert\E\vert.
\]

\subsection{Expanding the divergence-type functions} \label{subsection:computation-div}
For all~$\vert \boldgamma \vert=1,\dots,\p-1$, we write
\[
\phibolddiv_{\boldgamma}
=\sum_{j} \Agamma_j \psiboldB_j
 + \sum_{\boldbeta} \Bgamma_{\boldbeta} \psiboldperp_{\boldbeta}
 + \sum_{\bolddelta} \Cgamma_{\bolddelta} \psibolddiv_{\bolddelta}.
\]
We have to determine the A, B, and C-type coefficients by imposing the DoFs definition.
Imposing the boundary DoFs definition, we readily obtain that~$\Agamma_j=0$
for all~$j=1,\dots, 2\p \cdot(\# \text{edges of~$\E$})$.

Thus, we focus on the other coefficients.
First, we impose the ``orthogonal'' moments definition and write
\[
    0
    = \int_\E \phibolddiv_{\boldgamma} \cdot (\xperp \mbetaboldtilde)
    = \sum_{\boldbeta} \Bgamma_{\boldbeta} \int_\E \psiboldperp_{\boldbeta} \cdot (\xperp \mbetaboldtilde)
     + \sum_{\bolddelta} \Cgamma_{\bolddelta} \int_\E \psibolddiv_{\bolddelta} \cdot (\xperp \mbetaboldtilde),
\]
whence we deduce the conditions, for all~$\vert \boldbetatilde \vert =0,\dots,\p-3$,
\begin{equation} \label{coupling:divergence-orthogonal}
\sum_{\boldbeta} \left( \int_\E \psiboldperp_{\boldbeta} \cdot (\xperp \mbetaboldtilde) \right) \Bgamma_{\boldgamma} 
+ \sum_{\bolddelta} \left(\int_\E \psibolddiv_{\bolddelta} \cdot (\xperp \mbetaboldtilde) \right) \Cgamma_{\bolddelta} = 0.
\end{equation}
Next, we impose the divergence moments definition and get
\[
\frac{\vert\E\vert}{\hE} \deltacal_{\boldgamma, \bolddeltatilde}
=\int_\E \div \phibolddiv_{\boldalpha} \mdeltaboldtilde
= \sum_{\bolddelta} \Cgamma_{\bolddelta} \int_\E \div \psibolddiv_{\bolddelta} \mdeltaboldtilde
= \sum_{\bolddelta} \Cgamma_{\bolddelta} \int_\E \mdeltabold \ \mdeltaboldtilde,
\]
whence we deduce the conditions, for all~$\vert \bolddeltatilde \vert =1,\dots,\p-1$,
\begin{equation} \label{coupling:divergence-divergence}
\sum_{\bolddelta} \left( \int_\E \mdeltabold \ \mdeltaboldtilde \right) \Cgamma_{\bolddelta} 
= \frac{\vert\E\vert}{\hE} \deltacal_{\boldgamma, \bolddeltatilde}.
\end{equation}
The B and C-type coefficients are obtained by solving the linear system
resulting from~\eqref{coupling:divergence-orthogonal} and~\eqref{coupling:divergence-divergence}.

\subsection{Stability constants when increasing the degree of accuracy} \label{subsection:nr-p-version}
In light of the above approximation of the virtual element basis functions,
we provide here the minimum (nonzero) and maximum eigenvalues
of the generalized eigenvalue problem~\eqref{generalized:eigenvalue-problem}
on a given pentagon.
Such constants clearly correspond to~$\lambda_{min}=c_*$
and~$\lambda_{max}=c^*$ in the stability bound
\[
c_* \aE(\vbfn,\vbfn)
\le \ahE(\vbfn,\vbfn)
\le c^* \aE(\vbfn,\vbfn)
\qquad \forall \vbfn \in \VbfnE\setminus \Rbb.
\]
Notably, we analyze the eigenvalues while increasing the degree of accuracy of the scheme.

This is relevant to check as all the stability estimates are proved via inverse estimates,
which typically depend on the polynomial degree and the degree of accuracy of the scheme.
This is also interesting to check
the value of the ``hidden'' constants in the theoretical bounds.
We employ the theoretical~\eqref{explicit:stabilization-2D}
and the dofi-dofi~\eqref{dofi-dofi:stab:2D} stabilizations
and show the results in Table~\ref{table:eigenvalue:p-version}.

\begin{table}
\centering
\setlength{\abovecaptionskip}{-0.01cm}
\caption{Minimum and maximum eigenvalues of the generalized eigenvalue problem~\eqref{generalized:eigenvalue-problem}
employing the theoretical~\eqref{explicit:stabilization-2D}
and the dofi-dofi~\eqref{dofi-dofi:stab:2D} stabilizations~$\SE(\cdot,\cdot)$ and~$\SE_{D}(\cdot,\cdot)$.
We fix a pentagonal element and increase the degree of accuracy~$\p$.}
\label{table:eigenvalue:p-version}
{
\begin{tabular}{ccc|ccccc}
&\multicolumn{2}{c|}{$S^{\E}(\cdot,\cdot)$}&\multicolumn{2}{c}{$S_{D}^{\E}(\cdot,\cdot)$}\\
\hline
 \multicolumn{1}{c}{$p$} 
 & \multicolumn{1}{c}{$\lambda_{\min}$}
 & \multicolumn{1}{c|}{$\lambda_{\max}$}
 & \multicolumn{1}{c}{$\lambda_{\min}$}
 & \multicolumn{1}{c}{$\lambda_{\max}$}\\
\hline
2 & 1.7265e-01 & 1.0037e+00 & 1.7098e-01 & 1.0037e+00 \\
3 & 1.5570e-01 & 2.4381e+01 & 1.3578e-01 & 2.4269e+01 \\
4 & 1.4606e-01 & 2.5338e+01 & 1.2649e-01 & 2.5206e+01 \\
5 & 1.5364e-01 & 8.0281e+01 & 1.3667e-01 & 8.0154e+01 \\
6 & 1.4909e-01 & 9.8691e+01 & 1.2966e-01 & 9.8519e+01 \\
7 & 1.3999e-01 & 2.2056e+02 & 1.1612e-01 & 2.1990e+02 \\
8 & 1.2007e-01 & 2.9917e+02 & 1.0388e-01 & 2.9877e+02 \\
9 & 1.1133e-01 & 4.8713e+02 & 8.7618e-02 & 4.8637e+02 \\
\end{tabular}}
\centering
\end{table}

From Table~\ref{table:eigenvalue:p-version},
we observe that the stability constants
depend on the degree of accuracy~$\p$ only in a moderate way.
This is not surprising as a similar behaviour was observed for the Poisson-type virtual element method
in~\cite[Table~1]{BeiraoDaVeiga-Chernov-Mascotto-Russo:2016}
and~\cite[Table~1]{BeiraoDaVeiga-Chernov-Mascotto-Russo:2018}.

We are further interested in checking the condition number of the matrices~$\Abf$ and~$\Bbf$ appearing in~\eqref{generalized:eigenvalue-problem};
the details are detailed in Table~\ref{table:condition-p-version} below.

\begin{table}
\centering
\setlength{\abovecaptionskip}{-0.05cm}
\caption{Condition numbers of the virtual element matrices~$\Abf$ and~$\Abf_{D}$
(computed with respect to the theoretical~\eqref{explicit:stabilization-2D} and dofi-dofi~\eqref{dofi-dofi:stab:2D} stabilizations)
and the (finite element approximation of the) stiffness matrix~$\Bbf$
for increasing degree of accuracy~$\p$.}
{
\begin{tabular}{@{\extracolsep{\fill}}cccccccccccccc}
\multicolumn{1}{c}{$p$}
&\multicolumn{1}{c}{2}
&\multicolumn{1}{c}{3}
&\multicolumn{1}{c}{4}
&\multicolumn{1}{c}{5}
&\multicolumn{1}{c}{6}
&\multicolumn{1}{c}{7}
&\multicolumn{1}{c}{8} 
&\multicolumn{1}{c}{9}\\
\hline
$\Abf$      & 8.76e+01 & 4.92e+03 & 4.84e+05 & 3.72e+07 & 2.35e+09 & 1.41e+11 & 8.32e+12 & 4.93e+14\\
\hline
$\Abf_{D}$  & 8.78e+01 & 4.91e+03 & 4.80e+05 & 3.66e+07 & 2.30e+09 & 1.37e+11 & 8.05e+12 & 4.75e+14 \\
\hline
$\Bbf$      & 9.43e+01 & 5.40e+03 & 5.75e+05 & 4.42e+07 & 2.79e+09 & 1.67e+11 & 9.80e+12 & 5.68e+14\\
\end{tabular}}
\label{table:condition-p-version}
\centering
\end{table}

From Table~\ref{table:condition-p-version},
we see that the condition number of the (finite element approximation of the) stiffness matrix~$\Bbf$
is always larger than that of the corresponding virtual element matrices.
In a sense, this suggests that approximating the exact stiffness matrix
by a consistency and a stabilization term as in~\eqref{local-dscrete-bf}
is not leading to larger condition numbers
but rather has a slight beneficial effect.

\subsection{Stability constants on sequences of badly shaped elements} \label{subsection:nr-badly-shaped}
In this section, we provide the minimum (nonzero) and maximum eigenvalues
of the generalized eigenvalue problem~\eqref{generalized:eigenvalue-problem}
for a fixed degree of accuracy.
Notably, we analyze the behaviour of the eigenvalues
on sequences of elements with degenerating geometry.

The first sequence we consider is constructed as in Figure~\ref{figure:lose-star-shapedness}:
the first element is a square with a hanging node;
the other elements are obtained by moving the hanging node towards the opposite edge.
It is easy to check that the star-shapedness constant of the sequence goes to zero.

\begin{figure}
\begin{center}
\begin{tikzpicture}[anchor = south]
\draw[black, very thick, -] (0,0)--(2,0)--(2,2)--(0,2)--(0,0);
\filldraw[black] (0,0) circle (1.5pt);
\filldraw[black] (2,0) circle (1.5pt);
\filldraw[black] (2,2) circle (1.5pt);
\filldraw[black] (0,2) circle (1.5pt);
\draw[black, ->] (1,2) -- (1,1);
\draw[black, -] (.3,.25) node[black] {\tiny{$\E_1$}};
\end{tikzpicture}
\qquad
\begin{tikzpicture}[anchor=south]
\draw[black, very thick, -] (0,0)--(2,0)--(2,2)--(1,1)--(0,2)--(0,0);
\filldraw[black] (0,0) circle (1.5pt);
\filldraw[black] (2,0) circle (1.5pt);
\filldraw[black] (2,2) circle (1.5pt);
\filldraw[black] (0,2) circle (1.5pt);
\filldraw[black] (1,1) circle (1.5pt);
\draw[black, ->] (1,1) -- (1,.5);
\draw[black, -] (.3,.25) node[black] {\tiny{$\E_2$}};
\end{tikzpicture}
\qquad
\begin{tikzpicture}[anchor=south]
\draw[black, very thick, -] (0,0)--(2,0)--(2,2)--(1,.5)--(0,2)--(0,0);
\filldraw[black] (0,0) circle (1.5pt);
\filldraw[black] (2,0) circle (1.5pt);
\filldraw[black] (2,2) circle (1.5pt);
\filldraw[black] (0,2) circle (1.5pt);
\filldraw[black] (1,.5) circle (1.5pt);
\draw[black, ->] (1,.5) -- (1,.25);
\draw[black, -] (.3,.25) node[black] {\tiny{$\E_3$}};
\end{tikzpicture}
\qquad
\begin{tikzpicture}[anchor=south]
\draw[black, very thick, -] (0,0)--(2,0)--(2,2)--(1,.25)--(0,2)--(0,0);
\filldraw[black] (0,0) circle (1.5pt);
\filldraw[black] (2,0) circle (1.5pt);
\filldraw[black] (2,2) circle (1.5pt);
\filldraw[black] (0,2) circle (1.5pt);
\filldraw[black] (1,.25) circle (1.5pt);
\draw[black, ->] (1,.25) -- (1,.125);
\draw[black, -] (.3,.25) node[black] {\tiny{$\E_4$}};
\end{tikzpicture}
\qquad
\begin{tikzpicture}[anchor=south]
\draw[black, very thick, -] (0,0)--(2,0)--(2,2)--(1,.125)--(0,2)--(0,0);
\filldraw[black] (0,0) circle (1.5pt);
\filldraw[black] (2,0) circle (1.5pt);
\filldraw[black] (2,2) circle (1.5pt);
\filldraw[black] (0,2) circle (1.5pt);
\filldraw[black] (1,.125) circle (1.5pt);
\draw[black, ->] (1,.125) -- (1,.0625);
\draw[black, -] (.3,.25) node[black] {\tiny{$\E_5$}};
\end{tikzpicture}
\end{center}
\caption{First sequence of badly shaped elements.
The first element is a square with a hanging node;
the other elements are obtained by moving the hanging node towards the opposite edge.}
\label{figure:lose-star-shapedness}
\end{figure}
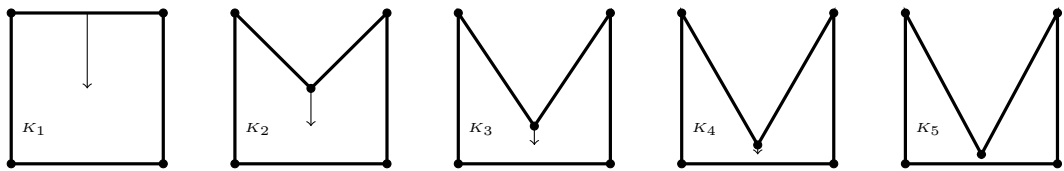

Next, we consider a sequence of elements obtained 
starting with a regular pentagon and halving the height
of the previous element in the sequence;
see Figure~\ref{figure:dilating-elements}.

\begin{figure}
\begin{center}
\begin{tikzpicture}[anchor = south]
\draw[black, very thick, -] (-.5,0)--(.5,0)--(1,1)--(0,2)--(-1,1)--(-.5,0);
\filldraw[black] (-.5,0) circle (1.5pt);
\filldraw[black] (.5,0) circle (1.5pt);
\filldraw[black] (1,1) circle (1.5pt);
\filldraw[black] (0,2) circle (1.5pt);
\filldraw[black] (-1,1) circle (1.5pt);
\draw[black, ->] (0,2) -- (0,1);
\draw[black, ->] (1,1) -- (1,.5);
\draw[black, ->] (-1,1) -- (-1,.5);
\draw[black, -] (-.5,1.6) node[black] {\tiny{$\E_1$}};
\end{tikzpicture}
\qquad
\begin{tikzpicture}[anchor=south]
\draw[black, very thick, -] (-.5,0)--(.5,0)--(1,.5)--(0,1)--(-1,.5)--(-.5,0);
\filldraw[black] (-.5,0) circle (1.5pt);
\filldraw[black] (.5,0) circle (1.5pt);
\filldraw[black] (1,.5) circle (1.5pt);
\filldraw[black] (0,1) circle (1.5pt);
\filldraw[black] (-1,.5) circle (1.5pt);
\draw[black, ->] (0,1) -- (0,.5);
\draw[black, ->] (1,.5) -- (1,.25);
\draw[black, ->] (-1,.5) -- (-1,.25);
\draw[black, -] (-.5,.8) node[black] {\tiny{$\E_2$}};
\end{tikzpicture}
\qquad
\begin{tikzpicture}[anchor=south]
\draw[black, very thick, -] (-.5,0)--(.5,0)--(1,.25)--(0,.5)--(-1,.25)--(-.5,0);
\filldraw[black] (-.5,0) circle (1.5pt);
\filldraw[black] (.5,0) circle (1.5pt);
\filldraw[black] (1,.25) circle (1.5pt);
\filldraw[black] (0,.5) circle (1.5pt);
\filldraw[black] (-1,.25) circle (1.5pt);
\draw[black, ->] (0,.5) -- (0,.25);
\draw[black, ->] (1,.25) -- (1,.125);
\draw[black, ->] (-1,.25) -- (-1,.125);
\draw[black, -] (-.5,.35) node[black] {\tiny{$\E_3$}};
\end{tikzpicture}
\qquad
\begin{tikzpicture}[anchor=south]
\draw[black, very thick, -] (-.5,0)--(.5,0)--(1,.125)--(0,.25)--(-1,.125)--(-.5,0);
\filldraw[black] (-.5,0) circle (1.5pt);
\filldraw[black] (.5,0) circle (1.5pt);
\filldraw[black] (1,.125) circle (1.5pt);
\filldraw[black] (0,.25) circle (1.5pt);
\filldraw[black] (-1,.125) circle (1.5pt);
\draw[black, ->] (0,.25) -- (0,.125);
\draw[black, ->] (1,.125) -- (1,.0625);
\draw[black, ->] (-1,.125) -- (-1,.0625);
\draw[black, -] (-.5,.2) node[black] {\tiny{$\E_4$}};
\end{tikzpicture}
\qquad
\begin{tikzpicture}[anchor=south]
\draw[black, very thick, -] (-.5,0)--(.5,0)--(1,.0625)--(0,.125)--(-1,.0625)--(-.5,0);
\filldraw[black] (-.5,0) circle (1.5pt);
\filldraw[black] (.5,0) circle (1.5pt);
\filldraw[black] (1,.0625) circle (1.5pt);
\filldraw[black] (0,.125) circle (1.5pt);
\filldraw[black] (-1,.0625) circle (1.5pt);
\draw[black, ->] (0,.125) -- (0,.0625);
\draw[black, ->] (1,.0625) -- (1,.03125);
\draw[black, ->] (-1,.0625) -- (-1,.03125);
\draw[black, -] (-.5,.1) node[black] {\tiny{$\E_5$}};
\end{tikzpicture}
\end{center}
\caption{First sequence of anisotropic elements.
The first element is a regular pentagon;
the other elements are obtained by halving the height of the previous polygon in the sequence.}
\label{figure:dilating-elements}
\end{figure}
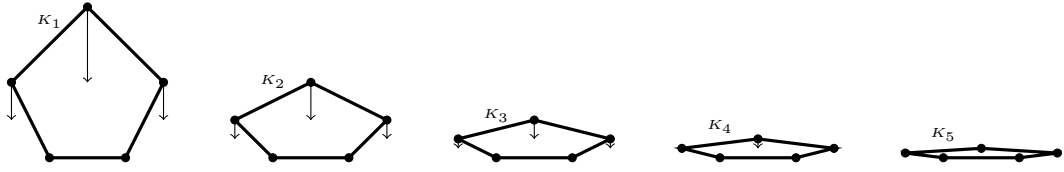

Both sequences do not satisfy the regularity assumptions in Section~\ref{section:introduction}.
For this reason, we cannot guarantee theoretically that the stability constants are robust with respect to the deformation of the elements.

In Table~\ref{table:badly-shaped}, we show the minimum (nonzero) and maximum generalized eigenvalues of~\eqref{generalized:eigenvalue-problem}
for~$\p=3$,
the theoretical~\eqref{explicit:stabilization-2D}
and the dofi-dofi~\eqref{dofi-dofi:stab:2D} stabilizations,
and the sequences of elements in Figures~\ref{figure:lose-star-shapedness} and~\ref{figure:dilating-elements}.

\begin{table}
\centering
\setlength{\abovecaptionskip}{-0.01cm}
\caption{Minimum and maximum eigenvalues of the generalized eigenvalue problem~\eqref{generalized:eigenvalue-problem}
employing the theoretical~\eqref{explicit:stabilization-2D}
and the dofi-dofi~\eqref{dofi-dofi:stab:2D} stabilizations~$\SE(\cdot,\cdot)$ and~$\SE_{D}(\cdot,\cdot)$.
We fix~$\p=3$ and consider the sequences of elements in Figures~\ref{figure:lose-star-shapedness} and~\ref{figure:dilating-elements}.}
\label{table:badly-shaped}
{
\begin{tabular}{@{\extracolsep{\fill}}ccc|ccccc}
&\multicolumn{2}{c|}{$S^{\E}(\cdot,\cdot)$}
&\multicolumn{2}{c}{$S_{D}^{\E}(\cdot,\cdot)$}\\
\hline
\multicolumn{1}{c}{$p=3$} 
& \multicolumn{1}{c} {$\lambda_{\min}$} 
& \multicolumn{1}{c|}{$\lambda_{\max}$}    
& \multicolumn{1}{c} {$\lambda_{\min}$}
& \multicolumn{1}{c} {$\lambda_{\max}$} \\
\hline
Figure~\ref{figure:lose-star-shapedness}& 1.7245e-01 & 2.4405e+01 & 1.5507e-01 & 2.4077e+01 \\
                                        & 2.4645e-02 & 2.6380e+01 & 2.2646e-02 & 2.5184e+01 \\
                                        & 2.0023e-02 & 5.2481e+01 & 1.9591e-02 & 5.0439e+01 \\
                                        & 1.1318e-02 & 8.3766e+01 & 1.1064e-02 & 8.0796e+01 \\
                                        & 6.2083e-03 & 1.1166e+02 & 6.0253e-03 & 1.0795e+02 \\
\hline
Figure~\ref{figure:dilating-elements}   & 1.1024e-01 & 2.9102e+01 & 1.0747e-01 & 2.9077e+01 \\
                                        & 3.5699e-02 & 5.7804e+01 & 3.5492e-02 & 5.7774e+01 \\
                                        & 8.6714e-03 & 1.7000e+02 & 8.6613e-03 & 1.6996e+02 \\
                                        & 1.9184e-03 & 6.1036e+02 & 1.9181e-03 & 6.1033e+02 \\
                                        & 4.9894e-04 & 2.3554e+03 & 4.9898e-04 & 2.3554e+03 \\
\end{tabular}}
\centering
\end{table}

From Table~\ref{table:badly-shaped}, we deduce that the stability constants are indeed deteriorating
together with the shape-regularity of the elements.

As in Section~\ref{subsection:nr-p-version},
we are interested in checking the condition number of the matrices~$\Abf$ and~$\Bbf$ appearing in~\eqref{generalized:eigenvalue-problem},
i.e., the condition number of the (finite element approximation of the) stiffness matrix~$\Bbf$ and its virtual element counterpart~$\Abf$;
the results are given in Table~\ref{table:condition-badly-shaped} below.

\begin{table}
\centering
\setlength{\abovecaptionskip}{-0.05cm}
\caption{Condition numbers of the
virtual element matrices~$\Abf$ and~$\Abf_{D}$
(computed with respect to the theoretical~\eqref{explicit:stabilization-2D} and dofi-dofi~\eqref{dofi-dofi:stab:2D} stabilizations)
and the (finite element approximation of the)} stiffness matrix~$\Bbf$
for degree of accuracy~$\p=3$ on the two sequences of elements
in Figures~\ref{figure:lose-star-shapedness} and~\ref{figure:dilating-elements}.
\label{table:condition-badly-shaped}
\begin{tabular}{@{\extracolsep{\fill}}cccccccccccccc}
$\Abf$ Fig.~\ref{figure:lose-star-shapedness}    & 2.9439e+03 & 1.3337e+04 & 1.8254e+04 & 2.1036e+04 & 2.2459e+04 \\
$\Abf$ Fig.~\ref{figure:dilating-elements}       & 2.2872e+04 & 3.1029e+05 & 4.6531e+06 & 7.2463e+07 & 1.1456e+09 \\
\hline
$\Abf_{D}$ Fig.~\ref{figure:lose-star-shapedness} & 2.9272e+03 & 1.3352e+04 & 1.8291e+04 & 2.1082e+04 & 2.2501e+04 \\
$\Abf_{D}$ Fig.~\ref{figure:dilating-elements}    & 2.2866e+04 & 3.1020e+05 & 4.6527e+06 & 7.2463e+07 & 1.1455e+09 \\
\hline
$\Bbf$ Fig.~\ref{figure:lose-star-shapedness}      & 3.6700e+03 & 3.2448e+04 & 1.9277e+05 & 3.2232e+05 & 4.1319e+05 \\
$\Bbf$ Fig.~\ref{figure:dilating-elements}         & 2.4815e+04 & 3.2299e+05 & 9.1850e+06 & 4.0054e+08 & 1.4112e+10 \\
\end{tabular}
\centering
\end{table}

From Table~\ref{table:condition-badly-shaped}, the condition number of the (finite element approximation of the) stiffness matrix~$\Bbf$
is always larger than that of the corresponding virtual element matrices.
Hence, similar comments as those for Table~\ref{table:condition-p-version} apply.

\section{Conclusions} \label{section:conclusions}
We investigated some open issues in the analysis of Stokes-like virtual element spaces.
Notably, we proved stability properties in two and three dimensions,
and furthermore derived interpolation estimates by simplifying the current state-of-the-art proofs.
Numerical experiments seem to indicate that the stability constants
only depend moderately on~$\p$;
however, they can degenerate more rapidly for nonregular element geometries.
On the other hand, such a discrepancy of the discrete form
with respect to the ``exact'' one
may be beneficial and explain why in this degenerate mesh conditions
the VEM often performs remarkably well.

\paragraph*{Funding.}
J. Meng has been supported by the China Scholarship Council (No. $202106280167$)
and the Fundamental Research Funds for the Central Universities (No. xzy $022019040$).
L. Beir\~ao da Veiga was partially supported by the Italian MIUR through the PRIN grants n. $905$ $201744$KLJL.
L. Mascotto acknowledges support from the Austrian Science Fund (FWF) Project P33477.

\paragraph*{Competing Interests.}
The authors have no relevant financial or non-financial interests to disclose.

\paragraph*{Data Availability.}
The datasets generated during and/or analysed during the current study are available on request.

{\footnotesize \bibliography{bibliogr}} \bibliographystyle{plain}

\end{document}